\newtheorem{theorem}{Theorem}
\theoremstyle{plain}
\newtheorem{acknowledgement}{Acknowledgement}
\newtheorem{corollary}{Corollary}
\newtheorem{lemma}{Lemma}
\newtheorem{proposition}{Proposition}
\newtheorem{remark}{Remark}
\numberwithin{equation}{section}
\begin{document}
\title[multivariate q-Normal]{On three dimensional multivariate version of
q-Normal distribution and probabilistic interpretations of Askey--Wilson,
Al-Salam--Chihara and q-ultraspherical polynomials .}
\author{Pawe\l\ J. Szab\l owski }
\address{Emeritus in Department of Mathematics and Information Sciences,
Warsaw University of Technology ul Koszykowa 75, 00-662 Warsaw, Poland }
\date{September 2017 }
\subjclass[2010]{Primary 62H86, 60E99; Secondary 05A30, 33D45}
\keywords{3 dimensional density, Conditional distribution, Moments,
Conditional Moments, Orthogonal polynomials. Al-Salam--Chihara, Rogers,
Askey-Wilson polynomials.}

\begin{abstract}
We study properties of compactly supported, 4 parameter \newline
$(\rho _{12},\rho _{23},\rho _{13},q)\in (-1,1)^{\times 4}$ family of
continuous type 3 dimensional distributions, that have the property that for 
$q\rightarrow 1^{-}$ this family tends to some 3 dimensional Normal
distribution. For $q=0$ we deal with 3 dimensional generalization of
Kesten--McKay distribution. In a very special case when $\rho _{12}\rho
_{13}\rho _{23}=q$ all one dimensional marginals are identical, semicircle
distributions. We find both all marginal as well as all conditional
distributions. Moreover, we find also families of polynomials that are
orthogonalized by these one-dimensional margins and one-dimensional
conditional distributions. Consequently, we find moments of both conditional
and unconditional distributions of dimensions one and two. In particular, we
show that all one-dimensional and two-dimensional conditional moments of,
say, order $n$ and are polynomials of the same order $n$ in the conditioning
random variables. Finding above mentioned orthogonal polynomials leads us to
a probabilistic interpretation of these polynomials. Among them are the
famous Askey-Wilson, Al-Salam--Chihara polynomials considered in the
complex, but conjugate, parameters, as well as q-Hermite and Rogers
polynomials. It seems that this paper is one of the first papers that give a
probabilistic interpretation of Rogers (continuous q-ultraspherical)
polynomials.
\end{abstract}

\maketitle

\section{Introduction}

The purpose of this note is double. Firstly, to present a further step in
the search for the non trivial generalizations of the Normal (Gaussian)
density. The unique properties of the Normal distribution are well known and
found to be useful in many, not only stochastic, applications. Let us
mention only one such property that is in fact well known, however, is less
popularized and which is one of the most useful. Namely, all conditional
mixed moments are polynomials of the same order in the conditioning random
variables. To formulate it precisely, let us introduce the following
notation. Let $\mathcal{F}$ denote some set of indices, then $\sigma
(X_{j},j\notin \mathcal{F})$ we mean the $\sigma -$field generated by all
random variables $X_{j}$ whose indices do not belong the set $\mathcal{F}$.
Now the property in question can be expressed in the precise fashion. Let us
assume that a vector $(X_{1},...,X_{n})$ has joint Normal distribution. Let $%
\mathcal{F}$ be a subset of $\left\{ 1,...,n\right\} ,$ then the conditional
expectation given by the expression $E(\prod_{i\in \mathcal{F}%
}X_{i}^{m_{i}}|\sigma (X_{j},j\notin \mathcal{F}))$ is a polynomial of order 
$\sum_{i\in \mathcal{F}}m_{i}$ in $X_{j},$ $j\notin \mathcal{F}$. The above
mentioned property happens to all nontrivial subsets $\mathcal{F}$ that is
why we will say that Normal distribution has polynomial conditional moments
property (PCM).

This beautiful property has numerous applications is the theory of Gaussian
stochastic processes. In particular, is strongly reflected in the properties
of Wiener and related processes whose role in the contemporary analysis is
difficult to overestimate.

The second intention of this note is to give a probabilistic interpretation
of certain, well known in other areas of mathematical analysis, families of
orthogonal polynomials. More precisely, we mean certain families of
polynomials that appear in the so called $q-$series theory and are related
to the so called Askey-Wilson polynomials. There were many interpretations
of these polynomials in combinatorics (to mention only works of Sylvie
Corteel). It was a great surprise to R. Askey to discover that the so called 
$q-$Hermite polynomials have nice probabilistic interpretation shown in the
work of W. Bryc \cite{Bryc2001S}. To the astonishment of R. Askey there was
no $q$ in the original formulation of the probabilistic problem. Later using
the same model Bryc, Matysiak and Szab\l owski were able to interpret
probabilistically the so called Al-Salam--Chihara polynomials. For details,
see \cite{bms}.

We are not that smart as W. Bryc and are not able to eliminate a variable $q$
from the formulation of the problem. However, we are able to define a
three-dimensional, compactly supported, continuous type distribution,
calculate its all marginal unconditional as well as conditional
distributions. Moreover, we are able to find conditional moments and show
that the analyzed distribution has PCM property.

We are also able to find polynomials that are orthogonalized by the
one-dimensional marginals (Rogers polynomials) and by the one-dimensional
conditional $X|(Y=y,Z=z)$ (Askey--Wilson polynomials).

We notice also that as the two-dimensional marginals appear products of the
so called $q-$conditional normal ($q-$CN) distribution analyzed say in \cite%
{Szab5} and, as mentioned above, as one-dimensional marginal Rogers
distribution. As a special case we get either the so called generalized
Kesten--McKay (recently analyzed in \cite{Szab18}) or the semicircle
distributions. Both of them have numerous applications. Especially the
semicircle distribution and related distributions lie in the center of the
so called free probability a branch of non-commutative probability recently
dynamically developing.

The paper is organized as follows. In the next section we introduce the
analyzed density. Next we introduce notation that describes it briefly. Then
in subsection \ref{aux} we provide definitions of the families of
polynomials that will appear in the sequel and also some auxiliary facts
concerning them.

Section, \ref{main} presents our main results. It consists of two
subsections the first one presenting marginal distributions and the families
of polynomials that are orthogonal with respect to them and the second
presenting conditional distributions and families of polynomials orthogonal
with respect to them. In particular, we are able to prove there PCM property
of the analyzed distribution. The last Section, \ref{open} presents some
remarks and poses some open problems.

\section{Multivariate density}

We will study the following $3-$dimensional density: 
\begin{gather*}
f_{3D}(x,y,z|\rho _{12},\rho _{13},\rho _{23},q)=f_{N}\left( x|q\right)
f_{N}(y|q)f_{N}(z|q) \\
\times \frac{C_{3D}\left( \rho _{12}^{2}\right) _{\infty }\left( \rho
_{13}^{2}\right) _{\infty }\left( \rho _{23}^{2}\right) _{\infty }}{%
\prod_{i=0}^{\infty }(\omega _{q}\left( x,y|\rho _{12}q^{i}\right) \omega
_{q}\left( x,z|\rho _{13}q^{i}\right) \omega _{q}\left( y,z|\rho
_{23}q^{i}\right) )}\text{,}
\end{gather*}%
where 
\begin{equation*}
f_{N}(x|q)=\frac{(q)_{\infty }\sqrt{1-q}\sqrt{4-(1-q)x^{2}}}{2\pi }%
\prod_{i=1}^{\infty }l_{q}\left( x|q^{i}\right) \text{,}
\end{equation*}%
with 
\begin{gather*}
l_{q}\left( x|a\right) =(1+a)^{2}-(1-q)ax^{2}\text{, } \\
\omega _{q}\left( x,y|\rho \right) =(1-\rho ^{2})^{2}-(1-q)xy\rho (1+\rho
^{2})+(1-q)\rho ^{2}\left( x^{2}+y^{2}\right) \text{,}
\end{gather*}%
$\left\vert q\right\vert ,\left\vert \rho _{12}\right\vert ,\left\vert \rho
_{13}\right\vert ,\left\vert \rho _{23}\right\vert <1$, $\left\vert
x\right\vert ,\left\vert y\right\vert ,\left\vert z\right\vert \leq 2/\sqrt{%
1-q}\text{,}$ and $C_{3D}$ is a suitable constant. Symbol $(a)_{n}$ will be
explained below.

Properties, alternative forms and ways of simulation of $f_{N}$ were
presented in \cite{Szab5}.

Ismail et al. in \cite{ISV87} proved rigorously that $f_{N}(x|q)\allowbreak
\rightarrow \allowbreak \frac{1}{\sqrt{2\pi }}\exp (-x^{2}/2)$ as $%
q\rightarrow 1^{-}.$

\subsection{Notation}

We will use the following denotations%
\begin{eqnarray*}
\lbrack 0]_{q} &=&0,~~[n]_{q}=1+...+q^{n-1},~~n\geq 1, \\
\lbrack n]_{q}! &=&\left\{ 
\begin{array}{ccc}
1 & if & n=0 \\ 
\prod_{i=1}^{n}[i]_{q} & if & n\geq 1%
\end{array}%
\right. ,\text{ }S\left( q\right) =\left\{ 
\begin{array}{ccc}
\lbrack -\frac{2}{\sqrt{1-q}},\frac{2}{\sqrt{1-q}}] & if & \left\vert
q\right\vert <1 \\ 
\mathbb{R} & if & q=1%
\end{array}%
\right. , \\
\QATOPD[ ] {n}{k}_{q} &=&\left\{ 
\begin{array}{ccc}
\frac{\lbrack n]_{q}!}{[k]_{q}![n-k]_{q}!} & if & 0\leq k\leq n \\ 
0 & if & \text{otherwise}%
\end{array}%
\right. 
\end{eqnarray*}%
and the so called $q-$Pochhammer symbol: 
\begin{equation*}
(a)_{j}=\left\{ 
\begin{array}{ccc}
\prod_{k=1}^{j}(1-aq^{k-1}) & if & j\geq 1 \\ 
0 & if & j=0%
\end{array}%
\right. \text{.}
\end{equation*}%
Let us note that $f_{3D}$ is nonnegative on the cube $S(q)^{\times 3}$.
Hence it can be viewed as a density.

It is obvious to notice that 
\begin{equation*}
(q)_{n}=(1-q)^{n}[n]_{q}!\text{.}
\end{equation*}

For $q=0$ we have 
\begin{equation*}
f_{3D}(x,y,z|\rho _{12},\rho _{13},\rho _{23},0)=C_{3D}\frac{\sqrt{%
(4-x^{2})(4-y^{2})(4-z^{2})}}{\omega _{0}(x,y|\rho _{12})\omega
_{0}(x,z|\rho _{13})\omega _{0}(y,z|\rho _{23})}\text{,}
\end{equation*}%
which is a variation of the multivariate generalization of Kesten--MacKay
distribution (compare \cite{Szab18} formula 2.8 ), while by taking $%
q\rightarrow 1^{-}$ we arrive at $3$ dimensional Normal distribution with
zero expectations and the following variance-covariance matrix :%
\begin{equation*}
\left[ 
\begin{array}{ccc}
\frac{1+r}{1-r} & \frac{\rho _{12}+\rho _{13}\rho _{23}}{1-r} & \frac{\rho
_{13}+\rho _{12}\rho _{23}}{1-r} \\ 
\frac{\rho _{12}+\rho _{13}\rho _{23}}{1-r} & \frac{1+r}{1-r} & \frac{\rho
_{23}+\rho _{12}\rho _{13}}{1-r} \\ 
\frac{\rho _{13}+\rho _{12}\rho _{23}}{1-r} & \frac{\rho _{23}+\rho
_{12}\rho _{13}}{1-r} & \frac{1+r}{1-r}%
\end{array}%
\right] \text{,}
\end{equation*}%
where $r=\rho _{12}\rho _{13}\rho _{23}\text{. This fact follows (\ref{eiv}%
), (\ref{cov}) and the properties of the function }f_{N}(x|q).$

\subsection{Some auxiliary facts\label{aux}}

The following facts were selected from \cite{IA}, \cite{Szab-rev} and \cite%
{Szab-bAW}.

We will use the following families of orthogonal polynomials:

\subsubsection{$q-$Hermite polynomials $\left\{ H_{j}(x|q)\right\} _{j\geq
-1}$.}

They satisfy the following recursive equations: \newline

\begin{equation*}
H_{n+1}(x|q)=xH_{n}(x|q)-[n]_{q}H_{n-1}(x|q)\text{,}
\end{equation*}%
with $H_{-1}(x|q)=0\text{,}$ $H_{0}(x|q)=1.$ They are orthogonal with
respect to the density $f_{N}(x|q)$ i.e. we have: 
\begin{equation}
\int_{S(q)}H_{i}(x|q)H_{j}(x|q)f_{N}(x,q)dx=\delta _{ij}[i]_{q}!\text{,}
\label{oH}
\end{equation}%
where $\delta _{ij}$ is the Kronecker's delta. In the sequel the following
Poisson-Mehler summation formula will be of great help: 
\begin{equation}
\frac{\left( \rho ^{2}\right) _{\infty }}{\prod_{i=0}^{\infty }\omega
_{q}\left( x,y|\rho q^{i}\right) }=\sum_{j\geq 0}\frac{\rho ^{j}}{_{[j]_{q}!}%
}H_{j}(x|q)H_{j}(y|q),  \label{P-M}
\end{equation}%
valid for all $\left\vert \rho \right\vert <1\text{,}$ $x,y\in S(q)\text{. }$%
\newline

Notice that we have useful formula:%
\begin{equation}
\sum_{j\geq 0}\frac{(\rho q)^{j}}{_{[j]_{q}!}}H_{j}(x|q)H_{j}(y|q)%
\allowbreak =\allowbreak \frac{\omega _{q}(x,y|\rho )}{(1-\rho ^{2})(1-\rho
^{2}q)}\sum_{j\geq 0}\frac{\rho ^{j}}{_{[j]_{q}!}}H_{j}(x|q)H_{j}(y|q),
\label{P-Mq}
\end{equation}%
for all $\left\vert \rho \right\vert <1\text{,}$ $x,y\in S(q),$ since $\frac{%
\left( q^{2}\rho ^{2}\right) _{\infty }}{\prod_{i=0}^{\infty }\omega
_{q}\left( x,y|\rho q^{i+1}\right) }\allowbreak =\allowbreak \frac{\omega
_{q}(x,y|\rho )}{(1-\rho ^{2})(1-\rho ^{2}q)}\frac{\left( \rho ^{2}\right)
_{\infty }}{\prod_{i=0}^{\infty }\omega _{q}\left( x,y|\rho q^{i}\right) }.$

$\text{Generalizations of (\ref{P-M}) and their consequences are presented
in \cite{SzabP-M}.}$

The following formula will also be of help:

\begin{lemma}
\label{3her}We have 
\begin{gather}
\int_{S(q)}H_{k}(x|q)H_{m}(x|q)H_{n}(x|q)f_{N}(x|q)dx=  \label{3pol} \\
\left\{ 
\begin{array}{ccc}
0 & if & 
\begin{array}{c}
k+m+n\;\text{is odd or}\; \\ 
k+m<n\;\text{or}\;k+n<m \\ 
\;\text{or}\;n+m<k%
\end{array}
\\ 
\frac{\lbrack m]_{q}![n]_{q}![k]_{q}!}{[\frac{m+n-k}{2}]_{q}![\frac{m+k-n}{2}%
]_{q}![\frac{n+k-m}{2}]_{q}!} & if & \;\text{otherwise}\;%
\end{array}%
\right. .  \notag
\end{gather}
\end{lemma}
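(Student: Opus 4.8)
The plan is to reduce the triple integral to a single surviving term by combining the linearization (product) formula for continuous $q$-Hermite polynomials with the orthogonality relation (\ref{oH}). The key input is the well-known linearization formula (see \cite{IA}), which in the present normalization (recurrence $H_{n+1}=xH_n-[n]_qH_{n-1}$) reads
\[
H_m(x|q)H_n(x|q)=\sum_{j=0}^{\min(m,n)}\frac{[m]_q![n]_q!}{[j]_q![m-j]_q![n-j]_q!}\,H_{m+n-2j}(x|q).
\]
I would first record this expansion, observing that its coefficient is the $q$-analogue of the classical Hermite linearization coefficient $\binom{m}{j}\binom{n}{j}j!$, written here as a single $q$-factorial quotient.

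Second, I would multiply both sides by $H_k(x|q)f_N(x|q)$ and integrate over $S(q)$. By (\ref{oH}) we have $\int_{S(q)}H_k(x|q)H_{m+n-2j}(x|q)f_N(x|q)dx=\delta_{k,m+n-2j}[k]_q!$, so every term of the linearization sum vanishes except the one with $m+n-2j=k$, that is $j=(m+n-k)/2$. This gives
\[
\int_{S(q)}H_kH_mH_n f_N\,dx=\frac{[m]_q![n]_q![k]_q!}{[j]_q![m-j]_q![n-j]_q!}\Bigg|_{j=(m+n-k)/2}.
\]

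Third, I would determine when the distinguished index $j=(m+n-k)/2$ is a legitimate summation index, i.e. a nonnegative integer with $0\le j\le\min(m,n)$; outside this range the corresponding coefficient is absent and the integral is $0$. Integrality forces $m+n+k$ to be even, which accounts for the odd case; $j\ge0$ requires $k\le m+n$, failing exactly when $n+m<k$; $j\le m$ requires $n\le m+k$, failing exactly when $k+m<n$; and $j\le n$ requires $m\le n+k$, failing exactly when $k+n<m$. These are precisely the parity and three triangle-type conditions in the statement. Substituting $m-j=(m+k-n)/2$ and $n-j=(n+k-m)/2$ into the surviving coefficient then produces the claimed value
\[
\frac{[m]_q![n]_q![k]_q!}{[\tfrac{m+n-k}{2}]_q!\,[\tfrac{m+k-n}{2}]_q!\,[\tfrac{n+k-m}{2}]_q!}.
\]

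The only substantive ingredient is the linearization formula itself; the orthogonality reduction and the index analysis are routine bookkeeping. If one prefers not to quote it, I would prove it by induction on $n$: the cases $n=0,1$ are immediate, and the inductive step uses $xH_n=H_{n+1}+[n]_qH_{n-1}$ applied to the $H_{m+n-2j}$ appearing on the right, after which the resulting coefficients are seen to obey a $q$-Pascal recursion matching the asserted product of $q$-binomials. This induction is the main obstacle; once it is in hand, the lemma follows by a one-line application of orthogonality.
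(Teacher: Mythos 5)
Your proof is correct and takes essentially the same route as the paper: both quote the $q$-Hermite linearization formula (the paper cites formula 3.13 of \cite{Szab-rev}, whose coefficient $\QATOPD[ ] {m}{j}_{q}\QATOPD[ ] {n}{j}_{q}[j]_{q}!$ is exactly the quotient of $q$-factorials you write) and then apply orthogonality (\ref{oH}) to kill every term except $j=(m+n-k)/2$. Your explicit check that $j$ lies in $\{0,\ldots,\min(m,n)\}$ just spells out what the paper handles by parity plus symmetry in $m,n,k$, so the two arguments coincide in substance.
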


\begin{proof}
We use linearization formula 3.13 of \cite{Szab-rev}. It is obvious that the
value of the integral is nonzero when $n+m-2j=k$ for some $j\geq 0\text{,}$
which means $m+n+k$ must be even and $n+m\geq k\text{.}$ Note that formula
must be symmetric with respect to $m,n,k\text{.}$ Then this value is equal
to $\QATOPD[ ] {n}{\frac{n+m-k}{2}}_{q}\QATOPD[ ] {m}{\frac{n+m-k}{2}}_{q}[%
\frac{n+m-k}{2}]_{q}![k]_{q}!$ which is equal to (\ref{3pol}).
\end{proof}

\subsubsection{Al-Salam--Chihara polynomials $\left\{ P_{n}(x|y,\protect\rho %
,q)\right\} _{n\geq -1}$.}

They satisfy the following recursive equations: 
\begin{equation}
P_{n+1}(x|y,\rho ,q)=(x-\rho yq^{n})P_{n}(x|y,\rho ,q)-(1-\rho
^{2}q^{n-1})[n]_{q}P_{n-1}(x|y,\rho ,q),  \label{asc}
\end{equation}%
with $P_{-1}(x|y,\rho ,q)=0\text{,}$ $P_{0}(x|y,\rho ,q)=1.$ They turn out
to be orthogonal with respect to the following densities: 
\begin{equation}
f_{CN}(x|y,\rho ,q)=f_{N}(x|q)\frac{(\rho ^{2})_{\infty }}{%
\prod_{i=0}^{\infty }\omega _{q}(x,y|\rho q^{i})}\text{.}  \label{CN}
\end{equation}%
That is we have: 
\begin{equation*}
\int_{S\left( q\right) }P_{n}\left( x|y,\rho ,q\right) P_{m}\left( x|y,\rho
,q\right) f_{CN}\left( x|y,\rho ,q\right) dx=\left\{ 
\begin{array}{ccc}
0 & when & n\neq m \\ 
\left( \rho ^{2}\right) _{n}\left[ n\right] _{q}! & when & n=m%
\end{array}%
\right. \text{.}
\end{equation*}%
Moreover, we know that the densities $f_{CN}$ have the following interesting
property (Chapman--Kolmogorov property): 
\begin{equation}
\int_{S\left( q\right) }f_{CN}\left( x|y,\rho _{1},q\right) f_{CN}\left(
y|z,\rho _{2},q\right) dy=f_{CN}\left( x|z,\rho _{1}\rho _{2},q\right) .
\label{C-K}
\end{equation}%
Probabilistic aspects of Al-Salam--Chihara polynomials are presented in \cite%
{bms}.

Notice that for $q\allowbreak \rightarrow \allowbreak 1^{-}$ we have $%
\forall n\geq 0\allowbreak :\allowbreak $%
\begin{equation*}
P_{n}(x|y,\rho ,q)\allowbreak \rightarrow \allowbreak H_{n}(\frac{x-\rho y}{%
\sqrt{1-\rho ^{2}}})(1-\rho ^{2})^{n/2}
\end{equation*}%
and $H_{n}(x|q)\allowbreak \rightarrow \allowbreak H_{n}(x)$, where $H_{n}$
denotes the so-called probabilistic Hermite polynomials i.e. monic,
orthogonal with respect to $\exp (-x^{2}/2).$ Consequently we deduce that 
\begin{equation*}
f_{CN}(x|y,\rho ,q)\allowbreak \rightarrow \allowbreak \exp (-\frac{(x-\rho
y)^{2}}{2(1-\rho ^{2})})/\sqrt{2\pi (1-\rho ^{2}},
\end{equation*}%
as $q\rightarrow 1^{-}$ by (\ref{CN}).

\begin{remark}
Observe that making use of the density $f_{CN}$ as well as (\ref{P-M}) we
deduce that $f_{3D}$ can be presented in one of the following two equivalent
forms: 
\begin{equation}
f_{3D}(x,y,z|\rho _{12},\rho _{13},\rho _{23},q)=C_{3D}f_{CN}(x|y,\rho
_{12},q)f_{CN}(y|z,\rho _{23},q)f_{CN}(z|x,\rho _{13},q).  \label{prod_c}
\end{equation}%
and also that: 
\begin{gather}
f_{3D}(x,y,z|\rho _{12},\rho _{13},\rho
_{23},q)=C_{3D}f_{N}(x|q)f_{N}(y|q)f_{N}(z|q)  \label{3PM} \\
\times \sum_{j,k,l\geq 0}\frac{\rho _{12}^{j}\rho _{23}^{k}\rho _{13}^{l}}{%
[j]_{q}![k]_{q}![l]_{q}!}%
H_{j}(x|q)H_{j}(y|q)H_{k}(y|q)H_{k}(z|q)H_{l}(x|q)H_{l}(z|q).  \notag
\end{gather}

Hence we see that for $q\allowbreak \rightarrow \allowbreak 1^{-}$ the
density $f_{3D}(x,y,z|\rho _{12},\rho _{13},\rho _{23},q)$ tends to the
density of Normal distribution with the variance-covariance matrix given in
the introduction above.
\end{remark}

\subsubsection{Continuous $q-$ultraspherical (Rogers) polynomials}

Name Rogers polynomials will be used for brevity. By Rogers polynomials it
is meant (see e.g. \cite{IA}) family of polynomials $\left\{ C_{n}(x|\beta
,q)\right\} _{n\geq -1}$ defined by the following three term recurrence : 
\begin{equation}
2x(1-\beta q^{n})C_{n}(x|\beta ,q)=(1-q^{n+1})C_{n+1}(x|\beta ,q)+(1-\beta
^{2}q^{n-1})C_{n-1}(x|\beta ,q)\text{,}  \label{3tc}
\end{equation}%
with $C_{-1}(x|\beta ,q)=0\text{,}$ $C_{0}(x|\beta ,q)=1$, $\left\vert \beta
\right\vert <1$. In fact we will need these polynomials modified in the
following way: 
\begin{equation}
C_{n}(x|\beta ,q)=\frac{(1-q)^{n/2}(\beta )_{n}}{(q)_{n}}R_{n}(\frac{2x}{%
\sqrt{1-q}}|\beta ,q)\text{.}  \label{newC}
\end{equation}%
By inserting (\ref{newC}) into (\ref{3tc}) and denoting $y=\frac{2x}{\sqrt{%
1-q}}\text{,}$ and canceling out $(\beta )_{n+1}(1-q)^{(n+1)/2}/(q)_{n},$
finally after little algebra, we end up with the following three term
recurrence: 
\begin{equation}
yR_{n}(y|\beta ,q)=R_{n+1}(y|\beta ,q)+[n]_{q}\frac{(1-\beta ^{2}q^{n-1})}{%
(1-\beta q^{n-1})(1-\beta q^{n})}R_{n-1}(y|\beta ,q),  \label{Rp}
\end{equation}%
since $\left( \beta \right) _{n+1}=\left( \beta \right) _{n}(1-\beta q^{n})%
\text{,}$ $(q)_{n}=(q)_{n-1}(1-q^{n})\text{,}$ $[n]_{q}=(1-q^{n})/(1-q)$. It
is known also (see e.g. \cite{Szab-rev}(2.37) or in non modified form \cite%
{IA}(13.2.4)) that the measure that makes these polynomials orthogonal has
the following density: 
\begin{equation}
f_{R}(x|\beta ,q)=f_{N}(x|q)\frac{(\beta ^{2})_{\infty }}{(\beta )_{\infty
}(\beta q)_{\infty }\prod_{i=0}^{\infty }l_{q}(x|\beta q^{i})}\text{.}
\label{fR}
\end{equation}%
To simplify notation we will call distribution with the density $f_{R}$ the
Rogers distribution. Notice that it is symmetric.

Note also that when $\beta \allowbreak =\allowbreak q$ then the equation (%
\ref{Rp}) is simplified to 
\begin{equation*}
yR_{n}(y|q,q)\allowbreak =\allowbreak R_{n+1}(y|q,q)+\frac{1}{1-q}%
R_{n-1}(y|q,q).
\end{equation*}%
Hence $R_{n}(y|q,q)\allowbreak =\allowbreak U_{n}(y\sqrt{1-q}/2)/(1-q)^{n/2}$%
, where $U_{n}(x)$ denotes the Chebyshev polynomial of the second kind,
since polynomials $\left\{ U_{n}\right\} $ satisfy the following three term
recurrence:%
\begin{equation*}
2xU_{n}(x)\allowbreak =\allowbreak U_{n+1}(x)+U_{n-1}(x),
\end{equation*}%
with $U_{-1}(x)\allowbreak =\allowbreak 0$ and $U_{0}(x)\allowbreak
=\allowbreak 1.$ Consequently one can see that 
\begin{equation*}
f_{R}(x|q,q)\allowbreak =\allowbreak \frac{\sqrt{1-q}\sqrt{4-(1-q)x^{2}}}{%
2\pi },
\end{equation*}%
$x\in S(q).$

It is worth to mention the following formula : 
\begin{equation}
\sum_{i\geq 0}\frac{r^{i}}{[i]_{q}!}H_{i+k}(x|q)H_{i+m}(x|q)=W_{k,m}(x|r,q)%
\times \sum_{i\geq 0}\frac{r^{i}}{[i]_{q}!}H_{i}(x|q)H_{i}(x|q).
\label{simp}
\end{equation}

where $W_{k,m}(x|r,q)$ is given by the following formula: 
\begin{equation}
W_{k,m}(x|r,q)=\sum_{s=0}^{k}\frac{q^{\binom{s}{2}}(-r)^{s}(r)_{m+s}}{%
(r^{2})_{m+s}}H_{k-s}(x|q)R_{m+s}(x|r,q).  \label{ww}
\end{equation}%
From its definition it follows that $W_{k,m}(x|r,q)=W_{m,k}(x|r,q)\text{,}$ $%
k,m\geq 0.$

In particular taking into account (\ref{P-Mq}) we have useful summation
formula: 
\begin{eqnarray*}
&&\sum_{i\geq 0}\frac{(qr)^{i}}{[i]_{q}!}H_{i+k}(x|q)H_{i+m}(x|q) \\
&=&W_{k,m}(x|rq,q)\frac{\omega _{q}(x,x|r)}{(1-r^{2})(1-qr^{2})}\times
\sum_{i\geq 0}\frac{r^{i}}{[i]_{q}!}H_{i}(x|q)H_{i}(x|q).
\end{eqnarray*}

(\ref{simp}) follows a formula in \cite{SzablAW}(Lemma 3 i)), where we set $%
x=y$ and the following observation concerning relationship between
polynomials $\left\{ P_{n}(x|x,\rho ,q)\right\} _{i\geq -1}$ and $\left\{
R_{n}(x|\rho ,q)\right\} _{i\geq -1}$ that: 
\begin{equation*}
R_{n}(x|\rho ,q)=P_{n}\left( x|x,\rho ,q\right) /(\rho )_{n}\text{.}
\end{equation*}

\section{Main results\label{main}}

\subsection{Marginals and moments}

\subsubsection{\textbf{Marginal distributions}}

We will find $C_{3D}$ as well as marginal densities first. We have the
following result:

\begin{theorem}
\label{margin}Let us denote for simplicity $r=\rho _{12}\rho _{13}\rho _{23}%
\text{.}$ Then

i) $C_{3D}=1-r$.

ii) two- dimensional marginals depend on two parameters (except for $q)$ in
fact. In the case of $f_{YZ}$ on $\rho _{23}$ and $\rho _{12}\rho _{13}$
only. 
\begin{gather*}
f_{YZ}(y,z|\rho _{12},\rho _{13},\rho _{23},q)=\int_{S(q)}f_{3D}(x,y,z|\rho
_{12},\rho _{13},\rho _{23},q)dx= \\
(1-r)f_{N}(y|q)f_{N}(z|q)\frac{\left( \rho _{23}^{2}\right) _{\infty }(\rho
_{12}^{2}\rho _{13}^{2})_{\infty }}{\prod_{i=0}^{\infty }\omega \left(
y,z|\rho _{23}q^{i}\right) \omega \left( y,z|\rho _{12}\rho
_{13}q^{i}\right) } \\
=(1-r)f_{CN}(y|z,\rho _{23},q)f_{CN}(z|y,\rho _{12}\rho _{13},q)
\end{gather*}%
and similarly for $f_{XZ}\text{,}$ and $f_{XY}\text{.}$

iii) Marginal one dimensional densities $\int_{S(q)}\int_{S(q)}f_{3D}(x,y,z|%
\rho _{12},\rho _{13},\rho _{23},q)dxdy=f_{Z}(z|\rho _{12},\rho _{13},\rho
_{23},q)$ depend on the product $r=\rho _{12}\rho _{23}\rho _{13}$ only.
More over we have $f_{Z}(z|\rho _{12},\rho _{13},\rho _{23},q)\allowbreak
=\allowbreak f_{R}(z|r,q),$ where $f_{R}$ is a Rogers distribution given by (%
\ref{fR}).
\end{theorem}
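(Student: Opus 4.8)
The plan is to work from the Poisson--Mehler series representation (\ref{3PM}) of $f_{3D}$ rather than the product form (\ref{prod_c}), since the orthogonality relation (\ref{oH}) then reduces each integration to the collapse of one summation index. Writing $f_{3D}$ as $C_{3D}f_{N}(x|q)f_{N}(y|q)f_{N}(z|q)$ times the triple sum $\sum_{j,k,l\geq 0}\frac{\rho_{12}^{j}\rho_{23}^{k}\rho_{13}^{l}}{[j]_q![k]_q![l]_q!}H_{j}(x|q)H_{j}(y|q)H_{k}(y|q)H_{k}(z|q)H_{l}(x|q)H_{l}(z|q)$, I would integrate $dx$ first. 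The variable $x$ occurs only in the factor $H_{j}(x|q)H_{l}(x|q)$, so by (\ref{oH}) the $x$-integral produces $\delta_{jl}[j]_q!$, which forces $l=j$ and cancels one factorial from the denominator.

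This leaves $f_{YZ}(y,z)=C_{3D}f_{N}(y|q)f_{N}(z|q)\sum_{j,k\geq 0}\frac{(\rho_{12}\rho_{13})^{j}\rho_{23}^{k}}{[j]_q![k]_q!}H_{j}(y|q)H_{j}(z|q)H_{k}(y|q)H_{k}(z|q)$, and because the $j$- and $k$-summations now separate, the double sum factorizes into a product of two single Poisson--Mehler sums. Applying (\ref{P-M}) once with parameter $\rho_{12}\rho_{13}$ and once with $\rho_{23}$ turns these into $\frac{(\rho_{12}^{2}\rho_{13}^{2})_{\infty}}{\prod_{i=0}^{\infty}\omega_q(y,z|\rho_{12}\rho_{13}q^{i})}$ and $\frac{(\rho_{23}^{2})_{\infty}}{\prod_{i=0}^{\infty}\omega_q(y,z|\rho_{23}q^{i})}$ respectively, which is exactly the asserted form of $f_{YZ}$ in (ii); rewriting each factor through the definition (\ref{CN}) yields the expression $f_{CN}(y|z,\rho_{23},q)f_{CN}(z|y,\rho_{12}\rho_{13},q)$. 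The cases $f_{XZ}$ and $f_{XY}$ follow by the evident cyclic symmetry of the construction.

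For (iii) I would integrate the factorized form of $f_{YZ}$ over $dy$; now $y$ sits in $H_{j}(y|q)H_{k}(y|q)$, so (\ref{oH}) collapses $k=j$ and gives $f_{Z}(z)=C_{3D}f_{N}(z|q)\sum_{j\geq 0}\frac{r^{j}}{[j]_q!}H_{j}(z|q)^{2}$ with $r=\rho_{12}\rho_{13}\rho_{23}$. The diagonal case $x=y=z$ of (\ref{P-M}) evaluates this sum as $(r^{2})_{\infty}/\prod_{i=0}^{\infty}\omega_q(z,z|rq^{i})$, and the crux is the algebraic identity $\omega_q(z,z|\rho)=(1-\rho)^{2}l_{q}(z|\rho)$, obtained by factoring $(1-\rho^{2})^{2}=(1-\rho)^{2}(1+\rho)^{2}$ out of $\omega_q(z,z|\rho)$. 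This converts the $\omega_q$ product into $(r)_{\infty}^{2}\prod_{i=0}^{\infty}l_{q}(z|rq^{i})$ and brings $f_{Z}$ into precisely the shape of the Rogers density (\ref{fR}) with $\beta=r$, up to the scalar $C_{3D}(r)_{\infty}(rq)_{\infty}/(r)_{\infty}^{2}=C_{3D}/(1-r)$, where I use the telescoping $(r)_{\infty}=(1-r)(rq)_{\infty}$. Since $f_{R}(\cdot|r,q)$ is itself a probability density, matching normalizations forces $C_{3D}=1-r$, which simultaneously establishes (i) and identifies $f_{Z}=f_{R}(\cdot|r,q)$ for (iii).

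The one genuinely delicate point is the identity $\omega_q(z,z|\rho)=(1-\rho)^{2}l_{q}(z|\rho)$ together with the careful bookkeeping of the $q$-Pochhammer symbols: everything downstream, including the exact value $C_{3D}=1-r$, hinges on recognizing that the diagonal of the two-variable kernel $\omega_q$ degenerates into the one-variable kernel $l_{q}$ of the Rogers weight, and on the cancellation $(r)_{\infty}=(1-r)(rq)_{\infty}$. The integrations and the factorization themselves are routine once (\ref{oH}) and (\ref{P-M}) are in hand; the only remaining thing to justify is the interchange of summation and integration, which follows from the absolute convergence of the Poisson--Mehler series on the compact support $S(q)^{\times 3}$ for $|\rho_{ij}|<1$.
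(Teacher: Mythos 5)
Your proof is correct, but it takes a genuinely different route from the paper's in part ii). The paper computes $f_{YZ}$ from the product representation (\ref{prod_c}), performing the $dx$-integral in one stroke via the Chapman--Kolmogorov property (\ref{C-K}) of the $f_{CN}$ densities; only afterwards, for the $dy$-integral, does it switch to the Poisson--Mehler expansion, collapse the double sum by orthogonality, and invoke the diagonal degeneration $\omega _{q}(z,z|r)=(1-r)^{2}l_{q}(z|r)$ together with $(r)_{\infty }=(1-r)(rq)_{\infty }$ to recognize the Rogers density and pin down $C_{3D}=1-r$ --- exactly as you do. You instead run the series machinery uniformly from the start: beginning with (\ref{3PM}), you collapse the index $l$ by orthogonality (\ref{oH}), then re-sum the separated $j$- and $k$-sums by (\ref{P-M}) to recover the two-kernel form of $f_{YZ}$. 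The two treatments are equivalent in substance --- the Chapman--Kolmogorov identity (\ref{C-K}) is itself naturally proved by precisely your orthogonality-collapse-and-resum argument --- but yours is more self-contained, needing only (\ref{oH}) and (\ref{P-M}) rather than citing (\ref{C-K}) as a black box, and it makes transparent at the level of series coefficients why $f_{YZ}$ depends only on $\rho _{23}$ and the product $\rho _{12}\rho _{13}$. What the paper's route buys is brevity and a structural point: the $f_{CN}$ kernels compose like Markov transition densities, which is the conceptual reason the two-dimensional marginal again factors into two $f_{CN}$'s. Your handling of parts i) and iii) (orthogonality collapse to the diagonal sum, the identity $\omega _{q}(z,z|\rho )=(1-\rho )^{2}l_{q}(z|\rho )$, comparison with (\ref{fR}), and normalization to force $C_{3D}=1-r$) coincides with the paper's, and your explicit remark on justifying the interchange of summation and integration is a point the paper passes over in silence.
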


\begin{proof}
We will use representation (\ref{prod_c}) and the property (\ref{C-K}).
First, let us calculate the integral: 
\begin{gather*}
f_{YZ}(y,z|\rho _{12},\rho _{13},\rho _{23},q)=\int_{S(q)}f_{3D}(x,y,z|\rho
_{12},\rho _{13},\rho _{23},q)dx= \\
C_{3D}f_{CN}(y|z,\rho _{23},q)\int_{S(q)}f_{CN}(x|y,\rho
_{12},q)f_{CN}(z|x,\rho _{13},q)dx \\
=C_{3D}f_{CN}(y|z,\rho _{23},q)f_{CD}(z|y,\rho _{12}\rho _{13},q) \\
=C_{3D}f_{N}(y|q)f_{N}(z|q)\frac{\left( \rho _{23}^{2}\right) _{\infty
}(\rho _{12}^{2}\rho _{13}^{2})_{\infty }}{\prod_{i=0}^{\infty }\omega
_{q}\left( y,z|\rho _{23}q^{i}\right) \omega _{q}\left( y,z|\rho _{12}\rho
_{13}q^{i}\right) }\text{.}
\end{gather*}%
Now let us calculate integral 
\begin{gather*}
f_{Z}(z|\rho _{12},\rho _{13},\rho
_{23},q)=\int_{S(q)}\int_{S(q)}f_{3D}(x,y,z|\rho _{12},\rho _{13},\rho
_{23},q)dxdy= \\
C_{3D}f_{N}(z|q)\int_{S(q)}\frac{\left( \rho _{23}^{2}\right) _{\infty
}(\rho _{12}^{2}\rho _{13}^{2})_{\infty }}{\prod_{i=0}^{\infty }\omega
_{q}\left( y,z|\rho _{23}q^{i}\right) \omega _{q}\left( y,z|\rho _{12}\rho
_{33}q^{i}\right) }f_{N}(y|q)dy \\
=C_{3D}f_{N}(z|q)\int_{S(q)}f_{N}(y|q)(\sum_{k,j\geq 0}\frac{(\rho _{12}\rho
_{13})^{j}\rho _{23}^{k}}{[j]_{q}![k]_{q}!}%
H_{j}(z|q)H_{j}(y|q)H_{k}(y|q)H_{k}(z|q))dy \\
=C_{3D}f_{N}(z|q)\sum_{j\geq 0}\frac{(\rho _{12}\rho _{13}\rho _{23})^{j}}{%
[j]_{q}!}H_{j}^{2}(z|q)\text{.}
\end{gather*}%
Now notice that using (\ref{P-M}) and noticing that $\omega _{q}\left(
x,x|r\right) =(1-r)^{2}l_{q}(x|r)$ 
\begin{eqnarray*}
\sum_{j\geq 0}\frac{(r)^{j}}{[j]_{q}!}H_{j}^{2}(z|q) &=&\frac{%
(r^{2})_{\infty }}{\prod_{i=0}^{\infty }(\omega _{q}\left( z,z|rq^{i}\right) 
}=\frac{(r^{2})_{\infty }}{\prod_{i=0}^{\infty }(1-rq^{i})^{2}l_{q}(z|rq^{i})%
} \\
&&\frac{(r^{2})_{\infty }}{(r)_{\infty }^{2}\prod_{i=0}^{\infty }l(z|rq^{i})}%
\text{.}
\end{eqnarray*}%
Comparing this with (\ref{fR}) we see that 
\begin{equation*}
f_{Z}(z|\rho _{12},\rho _{13},\rho _{23},q)=C_{3D}f_{R}(z|r,q)/(1-r),
\end{equation*}%
since $(r)_{\infty }\allowbreak =\allowbreak (1-r)(rq)_{\infty }$. Now since 
$f_{R}$ is the density we deduce that $C_{3D}=1-\rho _{12}\rho _{13}\rho
_{23}\allowbreak =\allowbreak 1-r$.
\end{proof}

\begin{proposition}
The densities $f_{Z}(z|\rho _{12},\rho _{13},\rho _{23},q)$, $f_{Y}$ and $%
f_{X}$ are of one of the following equivalent forms: 
\begin{gather}
(1-r)f_{N}(z|q)\sum_{j\geq 0}\frac{r^{j}}{[j]_{q}!}%
H_{j}^{2}(z|q)=(1-r)f_{CN}(z|z,r,q)  \label{1wym} \\
=(1-r)f_{N}(z|q)\frac{(r^{2})_{\infty }}{(r)_{\infty
}^{2}\prod_{i=0}^{\infty }l_{q}(z|rq^{i})}  \label{roge} \\
=(1-r)f_{N}(z|q)\sum_{k=0}^{\infty }\frac{r^{k}}{[k]_{q}!(r)_{k+1}}%
H_{2k}(x|q)  \label{r2} \\
=\frac{2(1+r)\sqrt{4-(1-q)z^{2}}\left( q\right) _{\infty }(r^{2}q)_{\infty }%
}{\pi l_{q}(z|r)(rq)_{\infty }^{2}}\prod_{j=1}^{\infty }\frac{l_{q}(z|q^{j})%
}{l_{q}(z|rq^{j})}.  \label{r3}
\end{gather}
\end{proposition}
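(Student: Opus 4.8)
The plan is to propagate the single representation of $f_{Z}$ already produced inside the proof of Theorem \ref{margin} through the four asserted identities, treating each as a separate, essentially self-contained manipulation. The starting point is the formula $f_{Z}(z|\rho_{12},\rho_{13},\rho_{23},q)=(1-r)f_{N}(z|q)\sum_{j\geq 0}\frac{r^{j}}{[j]_{q}!}H_{j}^{2}(z|q)$ obtained there, which is precisely the first expression in (\ref{1wym}). Its second shape, $(1-r)f_{CN}(z|z,r,q)$, is immediate: the definition (\ref{CN}) of $f_{CN}$ together with the Poisson--Mehler formula (\ref{P-M}) specialized to $x=y=z$ and $\rho=r$ gives $f_{CN}(z|z,r,q)=f_{N}(z|q)\sum_{j\geq 0}\frac{r^{j}}{[j]_{q}!}H_{j}^{2}(z|q)$, so no new computation is needed.

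For (\ref{roge}) I would again invoke (\ref{P-M}) at $x=y=z$, $\rho=r$ to replace the sum by $(r^{2})_{\infty}/\prod_{i\geq 0}\omega_{q}(z,z|rq^{i})$, then use the elementary identity $\omega_{q}(z,z|\rho)=(1-\rho)^{2}l_{q}(z|\rho)$, verified directly from the definitions of $\omega_{q}$ and $l_{q}$, together with $\prod_{i\geq 0}(1-rq^{i})=(r)_{\infty}$. This is exactly the manipulation already carried out at the end of the proof of Theorem \ref{margin}, so (\ref{roge}) demands only transcription.

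The one substantial step is (\ref{r2}). Here I would linearize the square using the connection coefficients extracted from Lemma \ref{3her} via the orthogonality relation (\ref{oH}) (equivalently formula 3.13 of \cite{Szab-rev}); taking $(k,m,n)=(2s,j,j)$ in Lemma \ref{3her} and dividing by $[2s]_{q}!$ yields $H_{j}^{2}(z|q)=\sum_{s=0}^{j}\binom{j}{s}_{q}^{2}[j-s]_{q}!\,H_{2s}(z|q)$. Substituting this into the series of (\ref{1wym}) and interchanging the two summations, the coefficient of $H_{2s}(z|q)$ becomes, after reindexing $j=s+i$, equal to $\frac{r^{s}}{[s]_{q}!}\sum_{i\geq 0}\binom{s+i}{i}_{q}r^{i}$; the inner series is summed by the $q$-binomial theorem $\sum_{i\geq 0}\binom{s+i}{i}_{q}r^{i}=1/(r)_{s+1}$, producing exactly the coefficient $\frac{r^{s}}{[s]_{q}!(r)_{s+1}}$ of (\ref{r2}). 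I expect the main obstacle to sit precisely here: justifying the interchange of summation (absolute convergence for $|r|<1$ and $z\in S(q)$, which follows from the convergence of the Poisson--Mehler kernel) and keeping the paper's Pochhammer convention $(r)_{s+1}=\prod_{k=0}^{s}(1-rq^{k})$ aligned through the $q$-binomial identity.

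Finally, (\ref{r3}) follows from (\ref{roge}) by inserting the explicit product form $f_{N}(z|q)=\frac{(q)_{\infty}\sqrt{1-q}\sqrt{4-(1-q)z^{2}}}{2\pi}\prod_{i\geq 1}l_{q}(z|q^{i})$ and carrying out the $q$-Pochhammer bookkeeping: splitting off the $i=0$ factor $l_{q}(z|r)$ from $\prod_{i\geq 0}l_{q}(z|rq^{i})$, and using $(r)_{\infty}=(1-r)(rq)_{\infty}$ and $(r^{2})_{\infty}=(1-r)(1+r)(r^{2}q)_{\infty}$ so that the $(1-r)$ prefactors cancel against those hidden in $(r)_{\infty}^{2}$. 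This step is purely mechanical; the only care required is tracking the numerical constant and the factor $\sqrt{1-q}$ down to the prefactor displayed in (\ref{r3}).
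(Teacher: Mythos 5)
Your proposal is correct and is essentially the paper's own proof: the paper likewise treats (\ref{1wym}) and (\ref{roge}) as transcriptions of the computation inside Theorem \ref{margin} (Poisson--Mehler at $x=y=z$ together with $\omega _{q}(z,z|\rho )=(1-\rho )^{2}l_{q}(z|\rho )$), obtains (\ref{r2}) from the linearization $H_{j}^{2}(x|q)=\sum_{k=0}^{j}\frac{([j]_{q}!)^{2}}{([k]_{q}!)^{2}[j-k]_{q}!}H_{2k}(x|q)$ (formula 3.13 of \cite{Szab-rev}, which is exactly your specialization of Lemma \ref{3her} via the orthogonality relation (\ref{oH})) followed by the same interchange of summation and the $q$-binomial series $\sum_{m\geq 0}r^{m}\genfrac{[}{]}{0pt}{}{m+k}{k}_{q}=1/(r)_{k+1}$, and derives (\ref{r3}) by the identical Pochhammer bookkeeping. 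One remark for your own reassurance: carrying out that bookkeeping carefully, as you propose, yields the prefactor $\sqrt{1-q}\,(1+r)(q)_{\infty }(r^{2}q)_{\infty }/(2\pi )$ rather than the $2(1+r)(q)_{\infty }(r^{2}q)_{\infty }/\pi $ displayed in (\ref{r3}) and repeated in the paper's computation, i.e.\ the displayed constant is off by the factor $4/\sqrt{1-q}$ (at $q=0$ it is four times the Kesten--McKay density recorded in Section \ref{open}), so the disagreement you would encounter there is a typo in the statement, not a gap in your argument.
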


\begin{proof}
Notice that we have: 
\begin{equation*}
\sum_{j\geq 0}\frac{r^{j}}{[j]_{q}!}H_{j}^{2}(z|q)=\sum_{j\geq 0}\frac{r^{j}%
}{[j]_{q}!}\sum_{k=0}^{j}\frac{([j]_{q}!)^{2}}{([k]_{q}!)^{2}[j-k]_{q}!}%
H_{2k}(x|q)\text{,}
\end{equation*}%
since $H_{j}^{2}(x|q)=\sum_{k=0}^{j}\frac{([j]_{q}!)^{2}}{%
([k]_{q}!)^{2}[j-k]_{q}!}H_{2k}(x|q)$ by \cite{Szab-rev} (3.13). Hence 
\begin{gather*}
f_{Z}(z|\rho _{12},\rho _{13},\rho
_{23},q)=(1-r)f_{N}(z|q)\sum_{k=0}^{\infty }\frac{r^{k}}{[k]_{q}!}%
H_{2k}(x|q)\sum_{j\geq k}r^{j-k}\QATOPD[ ] {j}{k}_{q} \\
=\allowbreak (1-r)f_{N}(z|q)\sum_{k=0}^{\infty }\frac{r^{k}}{%
[k]_{q}!(r)_{k+1}}H_{2k}(x|q),
\end{gather*}%
because $\sum_{m\geq 0}r^{m}\QATOPD[ ] {m+k}{k}_{q}=1/(r)_{k+1}$.

To get third, multiplicative form we argue as follows 
\begin{gather*}
f_{Z}(z|r,q)=(1-r)f_{N}(z|q)\sum_{j\geq 0}\frac{r^{j}}{[j]_{q}!}%
H_{j}^{2}(z|q) \\
=\frac{2(1-r^{2})\sqrt{4-(1-q)z^{2}}\left( q\right) _{\infty
}(r^{2})_{\infty }}{\pi l_{q}(z|r)(r)_{\infty }^{2}}\prod_{j=1}^{\infty }%
\frac{l_{q}(z|q^{j})}{l_{q}(z|rq^{j})} \\
=\frac{2\sqrt{4-(1-q)z^{2}}(1+r)(q)_{\infty }(r^{2}q)_{\infty }}{\pi
l_{q}(z|r)(rq)_{\infty }^{2}}\prod_{j=1}^{\infty }\frac{l_{q}(z|q^{j})}{%
l_{q}(z|rq^{j})}\text{.}
\end{gather*}
\end{proof}

\begin{proposition}
\label{ort} Polynomials that are orthogonal with respect to $f_{R}$ belong
to the family $\left\{ w_{n}(x|r,q)\right\} _{n\geq -1}$ of monic Rogers
polynomials satisfying three term recurrence given by (\ref{Rp}) with $\beta
=r$. Notice that $w_{n}(x|r,q)=P_{n}(x|x,r,q)/(r)_{n}$. Moreover we have 
\begin{equation*}
\int_{S(q)}w_{n}(x|r,q)w_{m}(x|r,q)f_{R}(x|r,q)=\left\{ 
\begin{array}{ccc}
0 & if & n\neq m \\[0pt] 
\lbrack n]_{q}!\frac{(1-r)(r^{2})_{n}}{(r)_{n}(r)_{n+1}} & if & m=n%
\end{array}%
\right. \text{.}
\end{equation*}
\end{proposition}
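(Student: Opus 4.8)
The plan is to read everything off the three-term recurrence (\ref{Rp}) specialized to $\beta=r$. First I would identify the family: the relation stated at the close of the auxiliary subsection, $R_n(x|\rho,q)=P_n(x|x,\rho,q)/(\rho)_n$, gives with $\rho=r$ that $w_n(x|r,q)=P_n(x|x,r,q)/(r)_n=R_n(x|r,q)$, and an immediate induction on (\ref{Rp}) (which expresses $R_{n+1}$ as $yR_n$ minus a lower-order multiple of $R_{n-1}$, starting from $R_0=1$) shows each $R_n(\cdot|r,q)$ is monic of degree $n$. Orthogonality with respect to $f_R(\cdot|r,q)$, and hence the vanishing of the integral for $n\neq m$, is precisely the content of (\ref{fR}) as quoted from \cite{Szab-rev},\cite{IA}; so only the squared norm $h_n:=\int_{S(q)}w_n^2\,f_R$ remains to be computed.

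For the norm I would invoke the standard fact that monic orthogonal polynomials obeying a symmetric recurrence $yR_n=R_{n+1}+\lambda_n R_{n-1}$ satisfy $h_n=\lambda_n h_{n-1}$, whence $h_n=h_0\prod_{k=1}^n\lambda_k$. Reading $\lambda_n$ off (\ref{Rp}) with $\beta=r$ gives $\lambda_n=[n]_q(1-r^2q^{n-1})/((1-rq^{n-1})(1-rq^n))$, and the product telescopes via the definition $(a)_n=\prod_{k=1}^n(1-aq^{k-1})$: one has $\prod_{k=1}^n[k]_q=[n]_q!$, $\prod_{k=1}^n(1-r^2q^{k-1})=(r^2)_n$, $\prod_{k=1}^n(1-rq^{k-1})=(r)_n$, and, after the index shift $k\mapsto k+1$, $\prod_{k=1}^n(1-rq^k)=(r)_{n+1}/(1-r)$. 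Combining these yields $h_n=h_0\,[n]_q!\,(1-r)(r^2)_n/((r)_n(r)_{n+1})$.

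Finally I would fix the normalization $h_0=\int_{S(q)}f_R(x|r,q)\,dx=1$, which holds because $f_R(\cdot|r,q)$ is a genuine probability density: it is exactly the one-dimensional marginal $f_Z$ of $f_{3D}$ identified in Theorem \ref{margin}(iii), and marginals integrate to one. With $h_0=1$ the expression collapses to the claimed $[n]_q!(1-r)(r^2)_n/((r)_n(r)_{n+1})$. The computation is otherwise mechanical; the only points demanding care are the $q$-Pochhammer bookkeeping — in particular the shifted product $\prod_{k=1}^n(1-rq^k)=(r)_{n+1}/(1-r)$ — and the justification of $h_0=1$, which is where the link to Theorem \ref{margin} is essential.
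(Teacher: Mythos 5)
Your proof is correct and takes essentially the same route as the paper, whose entire proof reads ``Consider (\ref{Rp}) and apply formula (2.2.18) of \cite{IA}'' --- that cited formula is precisely the standard fact $h_n=\lambda_n h_{n-1}$ (norms as products of the recurrence coefficients) that you invoke, followed by the same telescoping of the $q$-Pochhammer products. Your only deviation is the detour through Theorem \ref{margin}(iii) to get $h_0=1$; this is valid but unnecessary, since $f_R$ as defined in (\ref{fR}) (quoted from \cite{IA} and \cite{Szab-rev}) is already a normalized probability density, which is the very fact the paper's proof of Theorem \ref{margin} relies on.
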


\begin{proof}
Consider (\ref{Rp}) and apply formula (2.2.18) of \cite{IA}.
\end{proof}

\subsubsection{\textbf{Moments}}

Now let us calculate some moments of these variables. Notice that since
function $l$ depends on $z^{2}$ we deduce that $f_{Z}(z|r,q)$ is symmetric
in $z$, consequently all odd moments of $Z$ are equal to zero.

Further, we have the following lemma:

\begin{lemma}
\label{even}i) Suppose $Z\sim f_{Z}(z|r,q)\text{,}$ with $\left\vert
r\right\vert ,\left\vert q\right\vert <1\text{,}$ $z\in S(q)$, then 
\begin{equation*}
EH_{2n}(Z|q)=\frac{r^{n}[2n]_{q}!}{[n]_{q}!(rq)_{n}}\text{.}
\end{equation*}%
In particular 
\begin{equation}
EZ=0,\;\text{{}}\;\limfunc{var}(Z)=EZ^{2}=\frac{1+r}{1-rq}\text{.}
\label{eiv}
\end{equation}

ii) Let $(Y,Z)\sim f_{YZ}(y,z|\rho _{12},\rho _{13},\rho _{23},q)$, then 
\begin{gather*}
EH_{m}(Y|q)H_{n}(Z|q)=\sum_{s\geq \max (n,m)}\frac{1}{[\frac{s-m}{2}]_{q}![%
\frac{s-n}{2}]_{q}!} \\
\times \sum_{k=\frac{s}{2}-\min (\frac{n}{2},\frac{m}{2})}^{\min (\frac{n}{2}%
,\frac{m}{2})+\frac{s}{2}}\rho _{23}^{k}(\rho _{12}\rho _{13})^{s-k}\QATOPD[ 
] {m}{k-\frac{s-m}{2}}_{q}\QATOPD[ ] {n}{k-\frac{s-n}{2}}%
_{q}[k]_{q}![s-k]_{q}!\text{.}
\end{gather*}

In particular 
\begin{equation}
\limfunc{cov}(Y,Z)=\frac{(\rho _{23}+\rho _{12}\rho _{13})}{(1-rq)}\text{.}
\label{cov}
\end{equation}
\end{lemma}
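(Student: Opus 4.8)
The plan is to expand each density into a $q$-Hermite series via the Poisson--Mehler formula (\ref{P-M}) and then integrate term by term using orthogonality (\ref{oH}) and the triple-product formula of Lemma \ref{3her}. For part i) I start from the series form of the marginal exhibited in the proof of Theorem \ref{margin} (equivalently (\ref{1wym})), namely $f_Z(z|r,q)=(1-r)f_N(z|q)\sum_{j\geq 0}\frac{r^j}{[j]_q!}H_j^2(z|q)$. Multiplying by $H_{2n}(z|q)$ and integrating term by term gives
\[
EH_{2n}(Z|q)=(1-r)\sum_{j\geq 0}\frac{r^j}{[j]_q!}\int_{S(q)}H_{2n}(z|q)H_j(z|q)H_j(z|q)f_N(z|q)\,dz.
\]
Each integral is evaluated by Lemma \ref{3her} applied to the triple $(2n,j,j)$: the parity condition is automatic and the triangle conditions collapse to the single requirement $j\geq n$, in which case the value is $([j]_q!)^2[2n]_q!/\bigl([j-n]_q!([n]_q!)^2\bigr)$.

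Writing $j=n+m$ and using $[n+m]_q!/[m]_q!=[n]_q!\binom{n+m}{n}_q$ together with the identity $\sum_{m\geq 0}r^m\binom{m+n}{n}_q=1/(r)_{n+1}$ already invoked in the proof of (\ref{r2}), the series collapses to $(1-r)r^n[2n]_q!/\bigl([n]_q!(r)_{n+1}\bigr)$; since $(r)_{n+1}=(1-r)(rq)_n$ the factor $(1-r)$ cancels and leaves the asserted value $r^n[2n]_q!/([n]_q!(rq)_n)$. The two stated consequences then follow at once: symmetry of $f_R$ in $z$ forces all odd moments to vanish, so $EZ=0$, while $H_2(z|q)=z^2-1$ gives $EZ^2=EH_2(Z|q)+1=r(1+q)/(1-rq)+1=(1+r)/(1-rq)$, which is (\ref{eiv}).

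For part ii) I apply (\ref{P-M}) to each of the two factors in the two-dimensional marginal of Theorem \ref{margin}(ii), obtaining
\[
f_{YZ}(y,z)=(1-r)f_N(y|q)f_N(z|q)\sum_{j,l\geq 0}\frac{\rho_{23}^j(\rho_{12}\rho_{13})^l}{[j]_q![l]_q!}H_j(y|q)H_l(y|q)H_j(z|q)H_l(z|q).
\]
Then $EH_m(Y|q)H_n(Z|q)$ becomes a double series whose $(j,l)$ term splits into the product of the $y$-integral $\int H_m H_j H_l f_N$ and the $z$-integral $\int H_n H_j H_l f_N$, each given by Lemma \ref{3her}. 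Reindexing with $s=j+l$ and $k=j$ (so $l=s-k$), the two parity conditions force $s\equiv m\equiv n\pmod 2$, the conditions $j+l\geq m$ and $j+l\geq n$ yield the outer range $s\geq\max(m,n)$, and the remaining triangle inequalities combine into the inner range $\frac{s}{2}-\min(\frac{m}{2},\frac{n}{2})\leq k\leq\frac{s}{2}+\min(\frac{m}{2},\frac{n}{2})$ stated in the lemma.

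The main obstacle is the purely combinatorial bookkeeping: rewriting the two Lemma \ref{3her} values as a single product of $q$-binomials $\binom{m}{k-\frac{s-m}{2}}_q\binom{n}{k-\frac{s-n}{2}}_q$ times $[k]_q![s-k]_q!$ and the reciprocal factorials $1/([\frac{s-m}{2}]_q![\frac{s-n}{2}]_q!)$, all carried by the overall normalization $C_{3D}=1-r$, which must be tracked carefully. For the covariance I would specialize to $m=n=1$: because the marginals are symmetric, $EY=EZ=0$ and hence $\operatorname{cov}(Y,Z)=EYZ=EH_1(Y|q)H_1(Z|q)$, where $H_1(x|q)=x$. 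Only the terms with $|j-l|=1$ survive, and after evaluating the two resulting Hermite integrals the sum reduces to two geometric-type series governed by $\sum_{i\geq 0}[i+1]_q r^i=1/\bigl((1-r)(1-rq)\bigr)$; this gives $(1-r)(\rho_{23}+\rho_{12}\rho_{13})/\bigl((1-r)(1-rq)\bigr)=(\rho_{23}+\rho_{12}\rho_{13})/(1-rq)$, establishing (\ref{cov}).
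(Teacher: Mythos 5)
Your proposal is correct and takes essentially the same route as the paper's own proof: expand the marginal densities in $q$-Hermite series via (\ref{P-M}), integrate term by term using (\ref{oH}) and Lemma \ref{3her}, reindex with $s=j+l$, and close the sums with $\sum_{m\geq 0}r^{m}\binom{m+n}{n}_{q}=1/(r)_{n+1}$ and $\sum_{i\geq 0}[i+1]_{q}r^{i}=1/\bigl((1-r)(1-rq)\bigr)$; in part i) you merely inline the derivation of (\ref{r2}) rather than citing it. One remark: exactly as in the paper's own proof, your part ii) computation carries the overall normalization $C_{3D}=1-r$, which is absent from the lemma's displayed general formula (apparently a typo in the statement rather than a defect of your argument); this factor cancels in the specialization $m=n=1$, so (\ref{cov}) comes out correctly either way.
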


\begin{proof}
i) Let us calculate $\int_{S(q)}H_{2n}(z|q)f_{Z}(z|r,q)dz\text{.}$ Using (%
\ref{r2}) we have 
\begin{equation*}
\int_{S(q)}H_{2n}(z|q)f_{Z}(z|r,q)dz=(1-r)\sum_{k=0}^{\infty }\frac{r^{k}}{%
[k]_{q}!(r)_{k+1}}\int_{S(q)}H_{2n}(z|q)H_{2k}(z|q)f_{N}(z|q)dz\text{.}
\end{equation*}%
Hence 
\begin{equation*}
\int_{S(q)}H_{2n}(z|q)f_{Z}(z|r,q)dz=(1-r)\frac{r^{n}[2n]_{q}!}{%
[n]_{q}!(r)_{k+1}}.
\end{equation*}%
by (\ref{oH}). Recall also that $\frac{(1-r)}{(r)_{k+1}}\allowbreak
=\allowbreak \frac{1}{(rq)_{k}}$. For $n=1$ we get $EH_{2}(Z|q)=r\frac{1+q}{%
1-rq}\text{.}$ Now recall that $H_{2}(x|q)=x^{2}-1.$

ii) We have, denoting $r=$ $\rho _{12}\rho _{13}\rho _{23}:$ 
\begin{gather*}
EH_{m}(Y|q)H_{n}(Z|q)=(1-r)\sum_{j,k\geq 0}\frac{\rho _{23}^{k}(\rho
_{12}\rho _{13})^{j}}{(q)_{j}(q)_{k}} \\
\times
\int_{S(q)}H_{m}(y|q)H_{k}(y|q)H_{j}(y|q)f_{N}(y|q)dy%
\int_{S(q)}H_{n}(z|q)H_{k}(z|q)H_{j}(z|q)f_{N}(z|q)dz\text{.}
\end{gather*}%
Now we apply Lemma \ref{3her} and we have further: 
\begin{gather*}
EH_{m}(Y|q)H_{n}(Z|q)=(1-r) \\
\times \sum_{j,k\geq 0}\rho _{23}^{k}(\rho _{12}\rho _{13})^{j}\frac{%
[n]_{q}![m]_{q}![j]_{q}![k]_{q}!}{[\frac{n+j-k}{2}]_{q}![\frac{m+j-k}{2}%
]_{q}![\frac{n+k-j}{2}]_{q}![\frac{m+k-j}{2}]_{q}![\frac{j+k-m}{2}]_{q}![%
\frac{j+k-n}{2}]_{q}!} \\
=(1-r)\sum_{s\geq \max (n,m)}\frac{1}{[\frac{s-m}{2}]_{q}![\frac{s-n}{2}%
]_{q}!}\times \\
\sum_{k=\frac{s}{2}-\min (\frac{n}{2},\frac{m}{2})}^{\min (\frac{n}{2},\frac{%
m}{2})+\frac{s}{2}}\rho _{23}^{k}(\rho _{12}\rho _{13})^{s-k}\QATOPD[ ] {m}{%
k-\frac{s-m}{2}}_{q}\QATOPD[ ] {n}{k-\frac{s-n}{2}}_{q}[k]_{q}![s-k]_{q}!
\end{gather*}

For $m=n=1\text{ }$we see that $s+m$ as well as $s+n$ have to be even, hence 
$s$ has to be odd. Thus, since $H_{1}(x|q)=x$, we get 
\begin{gather*}
\limfunc{cov}(Y,Z)=EYZ=(1-r)\sum_{t=0}^{\infty }\frac{1}{[t]_{q}![t]_{q}!}%
\sum_{k=t}^{t+1}\rho _{23}^{k}(\rho _{12}\rho
_{13})^{2t+1-k}[k]_{q}![2t+1-k]_{q}!= \\
(1-r)\sum_{t=0}^{\infty }\frac{1}{[t]_{q}![t]_{q}!}(\rho _{23}^{t}(\rho
_{12}\rho _{13})^{t+1}[t]_{q}![t+1]_{q}!+\rho _{23}^{t+1}(\rho _{12}\rho
_{13})^{t}[t]_{q}![t+1]_{q}!)= \\
(1-r)(\rho _{23}+\rho _{12}\rho _{13})\sum_{t=0}^{\infty }r^{t}[t+1]_{q}=%
\frac{(1-r)}{1-q}(\rho _{23}+\rho _{12}\rho _{13})(\frac{1}{1-r}-\frac{q}{%
1-rq}) \\
=\frac{(\rho _{23}+\rho _{12}\rho _{13})}{1-rq}
\end{gather*}
\end{proof}

\subsection{Conditional distributions and conditional moments}

\subsubsection{\textbf{Conditional distributions}}

As far as conditional distributions are concerned, we have the following
simple result

\begin{proposition}
\label{cond}Following formulae (\ref{prod_c}) and (\ref{CN}) we have:%
\begin{gather}
X|(Y=y,Z=z)\sim f_{X|Y=y,Z=z}(x|y,z,\rho _{12},\rho _{13},\rho _{23},q) 
\notag \\
=f_{N}(x|q)\frac{\left( \rho _{12}^{2},\rho _{13}^{2}\right) _{\infty
}\prod_{i=0}^{\infty }\omega \left( y,z|\rho _{12}\rho _{13}q^{i}\right) }{%
(\rho _{12}^{2}\rho _{13}^{2})_{\infty }\prod_{i=0}^{\infty }(\omega
_{q}\left( x,y|\rho _{12}q^{i}\right) \omega _{q}\left( x,z|\rho
_{13}q^{i}\right) }  \notag \\
=\frac{f_{CN}(x|y,\rho _{12},q)f_{CN}(z|x,\rho _{13},q)}{f_{CN}(z|y,\rho
_{12}\rho _{13},q)},  \label{AW} \\
Y,Z|X=x\sim f_{Y,Z|X=x}(y,z|x,\rho _{12},\rho _{13},\rho _{23},q)  \notag \\
=\frac{f_{CN}(x|y,\rho _{12},q)f_{CN}(y|z,\rho _{23},q)f_{CN}(z|x,\rho
_{13},q)}{f_{CN}(x|x,r,q)}.  \label{RR}
\end{gather}
\end{proposition}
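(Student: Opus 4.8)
The plan is to compute each conditional density as the ratio of the joint density $f_{3D}$ to the appropriate marginal, and then to exploit the factored representation (\ref{prod_c}) so that almost everything cancels. No genuinely new computation is needed beyond what is already recorded in Theorem \ref{margin} and in the definition (\ref{CN}) of $f_{CN}$.

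For the first assertion (\ref{AW}) I would write
\[
f_{X|Y=y,Z=z}(x|y,z,\dots)=\frac{f_{3D}(x,y,z|\dots)}{f_{YZ}(y,z|\dots)},
\]
substitute (\ref{prod_c}) in the numerator and the form $f_{YZ}=(1-r)f_{CN}(y|z,\rho_{23},q)f_{CN}(z|y,\rho_{12}\rho_{13},q)$ from Theorem \ref{margin}(ii) in the denominator. Since $C_{3D}=1-r$ by Theorem \ref{margin}(i), the prefactors cancel, and the common factor $f_{CN}(y|z,\rho_{23},q)$ cancels as well, leaving exactly the ratio of $f_{CN}$'s displayed in (\ref{AW}). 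To recover the first, fully explicit form I would then insert (\ref{CN}) into that ratio: the factors $f_N(z|q)$ and $(\rho_{23}^{2})_\infty$ disappear, the surviving Pochhammer symbols combine into $(\rho_{12}^{2})_\infty(\rho_{13}^{2})_\infty/(\rho_{12}^{2}\rho_{13}^{2})_\infty$, and the $\omega_q$-products rearrange into the stated quotient.

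For the second assertion (\ref{RR}) the argument is identical in spirit. I would form $f_{3D}(x,y,z)/f_X(x)$, again insert (\ref{prod_c}) in the numerator, and for the denominator use the equivalent form $f_X(x)=(1-r)f_{CN}(x|x,r,q)$ recorded in (\ref{1wym}) (together with $f_X=f_R(\cdot|r,q)$ from Theorem \ref{margin}(iii)). The constant $C_{3D}=1-r$ cancels against the factor $(1-r)$, and what remains is precisely the threefold product of $f_{CN}$'s divided by $f_{CN}(x|x,r,q)$, which is (\ref{RR}).

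The calculations are essentially bookkeeping; the only place where care is needed is matching the orderings of the variable pairs inside each $\omega_q$-factor and pairing the Pochhammer symbols correctly when passing from the $f_{CN}$-ratio to the explicit product form in (\ref{AW})---in particular checking that the normalizations of the two numerator copies of $f_{CN}$ against the single denominator copy really produce the quotient $(\rho_{12}^{2})_\infty(\rho_{13}^{2})_\infty/(\rho_{12}^{2}\rho_{13}^{2})_\infty$. The one nonroutine input is the recognition, borrowed from (\ref{1wym}), that the marginal $f_X$ coincides with $(1-r)f_{CN}(x|x,r,q)$, since it is this identity that makes $C_{3D}$ cancel cleanly and puts the denominator of (\ref{RR}) into the desired $f_{CN}$ form.
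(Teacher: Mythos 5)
Your proposal is correct and follows exactly the route the paper intends: the paper gives no explicit proof (the proposition is stated as ``following formulae (\ref{prod_c}) and (\ref{CN})''), and your computation---conditional density as joint over marginal, with the product representation (\ref{prod_c}), Theorem \ref{margin} for the marginals, and the identity $f_X=(1-r)f_{CN}(x|x,r,q)$ from (\ref{1wym}) making $C_{3D}=1-r$ cancel---is precisely the bookkeeping the author leaves implicit. Nothing is missing; the symmetry of $\omega_q$ in its two variable arguments justifies the reorderings you flag.
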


\subsubsection{\textbf{Conditional moments}}

It has to be noted in this sections that whenever one considers conditional
expectation, then all equalities are considered \textbf{almost surely} with
respect to the distributions of the conditioning random variable(s).

\begin{remark}
Following observation contained in the formula (\ref{AW}), the results
presented in \cite{SzablAW} and then in \cite{Szab-bAW} we deduce that
conditional distribution of $X|(Y=y,Z=z)$ is the distribution that makes
Askey-Wilson polynomials with complex but conjugate parameters orthogonal.
More precisely, we take: 
\begin{eqnarray}
a &=&\frac{\sqrt{1-q}}{2}\rho _{1}(y-i\sqrt{\frac{4}{1-q}-y^{2}}),
\label{par1} \\
b &=&\frac{\sqrt{1-q}}{2}\rho _{1}(y+i\sqrt{\frac{4}{1-q}-y^{2}}),
\label{par2} \\
c &=&\frac{\sqrt{1-q}}{2}\rho _{2}(z-i\sqrt{\frac{4}{1-q}-z^{2}}),
\label{par3} \\
d &=&\frac{\sqrt{1-q}}{2}\rho _{2}(z+i\sqrt{\frac{4}{1-q}-z^{2}}).
\label{par4}
\end{eqnarray}
\end{remark}

From \cite{SzablAW} (2.15) and \cite{Szab-bAW} (4.9) we deduce that the
conditional expectations \newline
$E(H_{n}(X|q)|Y,Z)$ are polynomials in $Z$ and $Y$ of order $n\text{.}$
Following these results we have two equivalent forms of this function: (\cite%
{SzablAW} (3.2) and \cite{SzablAW} Lemma 3ii)):

\begin{proposition}
\label{2na1}$E(H_{n}(X|q)|Y=y,Z=z)$ has one of the following equivalent form:

i) 
\begin{gather}
E(H_{n}(X|q)|Y=y,Z=z)  \label{1na2_1} \\
=\sum_{s=0}^{n}\QATOPD[ ] {n}{s}_{q}\rho _{12}^{n-s}\rho _{13}^{s}\left(
\rho _{12}^{2}\right) _{s}H_{n-s}\left( y|q\right) P_{s}\left( z|y,\rho
_{12}\rho _{13},q\right) /(\rho _{12}^{2}\rho _{13}^{2})_{s}\text{,}  \notag
\end{gather}
where $\left\{ P_{s}\left( z|y,\rho _{1}\rho _{2},q\right) \right\} _{s\geq
-1}$ constitute the so called Al-Salam--Chihara polynomials given by (\ref%
{asc}).

ii) 
\begin{gather}
E(H_{n}(X|q)|Y=y,Z=z)=\frac{1}{\left( \rho _{12}^{2}\rho _{13}^{2}\right)
_{n}}\sum_{k=0}^{\left\lfloor n/2\right\rfloor }(-1)^{k}q^{\binom{k}{2}}%
\QATOPD[ ] {n}{2k}_{q}\QATOPD[ ] {2k}{k}_{q}\left[ k\right] _{q}!\rho
_{13}^{2k}\rho _{12}^{2k}  \label{1na2_2} \\
\times \left( \rho _{12}^{2},\rho _{13}^{2}\right) _{k}\sum_{j=0}^{n-2k}%
\QATOPD[ ] {n-2k}{j}_{q}\left( \rho _{12}^{2}q^{k}\right) _{j}\left( \rho
_{13}^{2}q^{k}\right) _{n-2k-j}\rho _{12}^{n-2k-j}\rho _{13}^{j}H_{j}\left(
z|q\right) H_{n-2k-j}(y|q)\text{.}  \notag
\end{gather}

iii) $E(P_{n}(X|y,\rho _{12},q)|Y=y,Z=z)=\frac{\rho _{13}^{n}(\rho
_{12}^{2})_{n}}{(\rho _{12}^{2}\rho _{13}^{2})_{n}}P_{n}(z|y,\rho _{12}\rho
_{13},q).$

In particular we have 
\begin{equation}
E(X|Y=y,Z=z)=\frac{y\rho _{12}(1-\rho _{13}^{2})+z\rho _{13}(1-\rho
_{12}^{2})}{1-\rho _{12}^{2}\rho _{13}^{2}}.  \label{ex}
\end{equation}
\end{proposition}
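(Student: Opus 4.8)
The plan is to establish the three equivalent forms by first securing one representation and then deriving the other two. The cleanest starting point is part (iii), because the conditional distribution $X|(Y=y,Z=z)$ is given explicitly by (\ref{AW}) as a ratio of Chapman--Kolmogorov factors, and the Al-Salam--Chihara polynomials $P_n(x|y,\rho_{12},q)$ are precisely the polynomials orthogonal with respect to $f_{CN}(x|y,\rho_{12},q)$. First I would write $E(P_n(X|y,\rho_{12},q)|Y=y,Z=z)$ as the integral of $P_n(x|y,\rho_{12},q)$ against the conditional density (\ref{AW}); pulling the $z$-dependent normalizer $f_{CN}(z|y,\rho_{12}\rho_{13},q)$ out front, the remaining integral is $\int_{S(q)} P_n(x|y,\rho_{12},q)\, f_{CN}(x|y,\rho_{12},q)\, f_{CN}(z|x,\rho_{13},q)\, dx$. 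The key is to expand $f_{CN}(z|x,\rho_{13},q)$ via the Poisson--Mehler formula (\ref{P-M}), reducing this to a sum of integrals of $P_n(x|y,\rho_{12},q)$ against $H_k(x|q)$ weighted by $f_{CN}(x|y,\rho_{12},q)$. The connection coefficients between $\{P_n(\cdot|y,\rho_{12},q)\}$ and $\{H_k(\cdot|q)\}$ are known (this is exactly the material cited from \cite{SzablAW} and \cite{Szab-bAW}), so only the term matching the degree survives orthogonality, and after collecting the Pochhammer factors one recognizes the Poisson--Mehler kernel for $f_{CN}(\cdot|\cdot,\rho_{12}\rho_{13},q)$. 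This should yield (iii) directly.

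With (iii) in hand, I would obtain (i) by inverting the connection between $H_n(x|q)$ and the Al-Salam--Chihara polynomials: the expansion of $H_n(x|q)$ in the basis $\{P_s(x|y,\rho_{12},q)\}_s$ has explicit coefficients $\QATOPD[ ] {n}{s}_{q}\rho_{12}^{n-s}(\rho_{12}^2)_s H_{n-s}(y|q)$ (this is the standard linearization recorded in \cite{SzablAW}(3.2)). Substituting this expansion into $E(H_n(X|q)|Y=y,Z=z)$, exchanging sum and conditional expectation, and applying (iii) termwise produces each summand of (\ref{1na2_1}), with the factor $\rho_{13}^{s}(\rho_{12}^2)_s/(\rho_{12}^2\rho_{13}^2)_s$ emerging from (iii). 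The form (ii) is then a matter of re-expanding the Al-Salam--Chihara polynomial $P_s(z|y,\rho_{12}\rho_{13},q)$ appearing in (i) back into products $H_j(z|q)H_{s-2k-j}(y|q)$ via its own linearization; this is the purely computational identity \cite{SzablAW} Lemma 3ii), and the double sum with the $q^{\binom{k}{2}}$ and $\QATOPD[ ] {2k}{k}_{q}$ factors is exactly the output of that linearization after regrouping indices.

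Finally, the special case (\ref{ex}) follows by setting $n=1$. Since $H_1(x|q)=x$, form (i) gives $E(X|Y=y,Z=z)=\rho_{12}(1-\rho_{13}^2)H_1(y|q)/(1-\rho_{12}^2\rho_{13}^2)+\rho_{13}(\rho_{12}^2)_1 P_1(z|y,\rho_{12}\rho_{13},q)/(1-\rho_{12}^2\rho_{13}^2)$, and using $P_1(z|y,\rho,q)=z-\rho y$ with $\rho=\rho_{12}\rho_{13}$ and $(\rho_{12}^2)_1=1-\rho_{12}^2$, a short simplification collapses to (\ref{ex}).

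The main obstacle I anticipate is not any single step but the careful bookkeeping of the Pochhammer and Gaussian-binomial factors when passing between the three forms, together with verifying that the summation indices in (ii) (the range $0\le k\le\lfloor n/2\rfloor$ and $0\le j\le n-2k$) match exactly what the double linearization produces. The analytic content is essentially contained in Poisson--Mehler (\ref{P-M}) and the Chapman--Kolmogorov property (\ref{C-K}); the difficulty is the algebraic identity relating the Al-Salam--Chihara and $q$-Hermite linearization coefficients, which is where I would lean hardest on the precomputed formulas from \cite{SzablAW} rather than rederiving them.
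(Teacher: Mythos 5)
Your proposal is correct in substance, but it takes a genuinely different route from the paper. The paper's own proof is essentially a pointer to the literature: having identified (in the remark containing (\ref{par1})--(\ref{par4})) the conditional density (\ref{AW}) as an Askey--Wilson density with complex conjugate parameters, it quotes form ii) from \cite{SzablAW} (3.2--3.3), form i) from \cite{SzablAW} Lemma 3 ii), and form iii) from \cite{Szab-bAW} Thm.~4.1 iii); only the specialization $n=1$ yielding (\ref{ex}) is carried out on the spot. You instead derive iii) from first principles --- integrating $P_{n}(x|y,\rho _{12},q)$ against (\ref{AW}), expanding $f_{CN}(z|x,\rho _{13},q)$ by Poisson--Mehler (\ref{P-M}), and using orthogonality together with the $q$-Hermite-to-Al-Salam--Chihara connection coefficients and the shifted Poisson--Mehler identity (\cite{SzablAW} Lemma 3 i), of which (\ref{simp}) is the $x=y$ special case) --- and then obtain i) from iii) by expanding $H_{n}(x|q)$ in the basis $\left\{ P_{s}(x|y,\rho _{12},q)\right\} _{s}$, and ii) from i) by further re-expansion. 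Your organization buys a proof that is self-contained modulo two algebraic identities from \cite{SzablAW} and that exhibits the mutual consistency of the three forms; the paper's route buys brevity at the price of sending the reader to two external papers for all three formulas.

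Two bookkeeping corrections, both of the kind you yourself flagged as the main risk. First, the connection formula you invoke should read
\begin{equation*}
H_{n}(x|q)=\sum_{s=0}^{n}\QATOPD[ ] {n}{s}_{q}\rho _{12}^{n-s}H_{n-s}(y|q)P_{s}(x|y,\rho _{12},q)\text{,}
\end{equation*}
that is, \emph{without} the factor $(\rho _{12}^{2})_{s}$. As you wrote it, that factor would be counted twice --- once in the expansion and once in the factor $\rho _{13}^{s}(\rho _{12}^{2})_{s}/(\rho _{12}^{2}\rho _{13}^{2})_{s}$ delivered by iii) --- so the result would carry $(\rho _{12}^{2})_{s}^{2}$, contradicting (\ref{1na2_1}). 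Your intermediate display for $n=1$ has the same flaw: the $s=0$ term of i) is just $\rho _{12}H_{1}(y|q)$, and the coefficient $\rho _{12}(1-\rho _{13}^{2})/(1-\rho _{12}^{2}\rho _{13}^{2})$ of $y$ in (\ref{ex}) emerges only after the $y$-part of $P_{1}(z|y,\rho _{12}\rho _{13},q)=z-\rho _{12}\rho _{13}y$ is merged in. Second, the passage from i) to ii) needs two expansions, not one: $P_{s}(z|y,\rho _{12}\rho _{13},q)$ must be written in terms of products $H_{j}(z|q)H_{i}(y|q)$, and the resulting products of two $q$-Hermite polynomials in the variable $y$ must then be linearized back into single ones; both identities are available in \cite{SzablAW}, so the step is feasible, just heavier than the single regrouping you describe. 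Neither point affects the viability of your approach.
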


\begin{proof}
Following Remark \ref{cond} we recall that following \cite{SzablAW}(2.15),
properties of\newline
$f_{X|Y,Z}(x|y,z,\rho _{12},\rho _{13},\rho _{23},q)$ are basically known.
In particular following (3.2-3.3) of \cite{SzablAW} we get ii). Further
following Lemma 3 ii) of \cite{SzablAW} we get i). Finally following Thm.
4.1iii) of \cite{Szab-bAW} we get iii). Now taking either i) or iii) and $%
n=1 $ and $H_{1}(x|q)=x\text{,}$ $P_{1}(x|y,\rho ,q)=x-\rho y$ we get \ref%
{ex}.
\end{proof}

We also have similar result concerning one dimensional conditional moments:

\begin{theorem}
\label{c2d}One dimensional conditional moments say $E(H_{n}(Y|q)|Z=z)$ are
polynomials of order not exceeding $n$ in $Z\text{.}$ More precisely for $%
n=2m+1$ we have 
\begin{equation*}
E(H_{2m+1}(Y|q)|Z=z)=\sum_{s=0}^{m}\QATOPD[ ] {2m+1}{s}_{q}(\rho
_{23}^{2m+1-s}+(\rho _{12}\rho _{13})^{2m+1-s})W_{s,2m+1-s}(z|r,q)\text{,}
\end{equation*}%
and for $n=2m\text{,}$ $m\geq 1$ we have: 
\begin{gather*}
E(H_{2m}(Y|q)|Z=z)=\QATOPD[ ] {2m}{m}_{q}r^{m}W_{m,m}(z|r,q) \\
+\sum_{s=0}^{m-1}\QATOPD[ ] {2m}{s}_{q}(\rho _{23}^{2m-s}+(\rho _{12}\rho
_{13})^{2m-s})W_{s,2m-s}(z|r,q)\text{,}
\end{gather*}%
where polynomials $\left\{ W_{m,k}(z|r,q)\right\} _{m,k\geq 0}$ are given by
(\ref{ww}).

In particular we have:

\begin{equation}
E(Y|Z=z)=\frac{(\rho _{23}+\rho _{12}\rho _{13})}{(1+r)}z\text{,}
\label{cyz}
\end{equation}%
and

\begin{equation}
E(Y^{2}|Z=z)=\frac{(\rho _{23}^{2}+\rho _{12}^{2}\rho
_{13}^{2})(1-qr)+r(1-r)(1+q)}{(1+r)(1-qr^{2})}z^{2}+\frac{1+r^{2}-\rho
_{23}^{2}-\rho _{12}^{2}\rho _{13}^{2}}{(1-qr^{2})}\text{.}  \label{cy2z}
\end{equation}
\end{theorem}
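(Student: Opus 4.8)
The plan is to compute $E(H_n(Y|q)|Z=z)$ directly as the ratio $\int_{S(q)}H_n(y|q)f_{YZ}(y,z)\,dy\,\big/\,f_Z(z)$, exploiting that both numerator and denominator diagonalize in the $q$-Hermite basis. First I would insert the two Poisson--Mehler expansions (\ref{P-M}) into the two-dimensional marginal from Theorem \ref{margin}(ii) (via (\ref{CN})), writing
\begin{gather*}
f_{YZ}(y,z)=(1-r)f_N(y|q)f_N(z|q) \\
\times\sum_{k,j\geq 0}\frac{\rho_{23}^{k}(\rho_{12}\rho_{13})^{j}}{[k]_q![j]_q!}H_k(y|q)H_k(z|q)H_j(y|q)H_j(z|q).
\end{gather*}
Integrating against $H_n(y|q)$ against the weight $f_N(y|q)$ turns the $y$-integral into the triple product $\int H_nH_kH_jf_N$, which Lemma \ref{3her} evaluates. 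Thus the numerator equals $(1-r)f_N(z|q)[n]_q!$ times a double sum of $\rho_{23}^{k}(\rho_{12}\rho_{13})^{j}H_k(z|q)H_j(z|q)$ divided by $[\tfrac{n+k-j}{2}]_q![\tfrac{n+j-k}{2}]_q![\tfrac{k+j-n}{2}]_q!$, over those $k,j$ satisfying the triangle and parity conditions of Lemma \ref{3her}.

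The key manoeuvre is the reparametrization $\alpha=\tfrac{n+k-j}{2}$, $\gamma=\tfrac{k+j-n}{2}$, so that $k=\alpha+\gamma$ and $j=(n-\alpha)+\gamma$ with $0\le\alpha\le n$ and $\gamma\ge 0$. Under this change the triangle/parity constraints become automatic (since $n+k+j=2n+2\gamma$), and the monomial factorizes as $\rho_{23}^{k}(\rho_{12}\rho_{13})^{j}=\rho_{23}^{\alpha}(\rho_{12}\rho_{13})^{n-\alpha}r^{\gamma}$, because $\rho_{23}\rho_{12}\rho_{13}=r$. I would then recognize the inner $\gamma$-sum as the left-hand side of (\ref{simp}) with shifts $(\alpha,n-\alpha)$ and ratio $r$, so it equals $W_{\alpha,n-\alpha}(z|r,q)$ times the diagonal sum $\sum_{\gamma\ge 0}\frac{r^{\gamma}}{[\gamma]_q!}H_\gamma^2(z|q)$. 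Crucially, that diagonal sum is exactly the one in the Rogers marginal $f_Z(z)=(1-r)f_N(z|q)\sum_{\gamma}\frac{r^{\gamma}}{[\gamma]_q!}H_\gamma^2(z|q)$ from (\ref{1wym}). Dividing the numerator by $f_Z(z)$ therefore cancels the entire diagonal factor, leaving the compact expression $\sum_{\alpha=0}^{n}\QATOPD[ ] {n}{\alpha}_{q}\rho_{23}^{\alpha}(\rho_{12}\rho_{13})^{n-\alpha}W_{\alpha,n-\alpha}(z|r,q)$.

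Finally I would collect the conjugate indices $\alpha$ and $n-\alpha$ by means of the symmetry $W_{s,n-s}=W_{n-s,s}$ together with $\QATOPD[ ] {n}{s}_{q}=\QATOPD[ ] {n}{n-s}_{q}$, producing for each $s$ a single $\QATOPD[ ] {n}{s}_{q}$-weighted contribution. For $n=2m+1$ this pairs the sum into the terms indexed by $s=0,\dots,m$; for $n=2m$ the self-conjugate index $\alpha=m$ isolates the central term $\QATOPD[ ] {2m}{m}_{q}r^{m}W_{m,m}(z|r,q)$ (using $\rho_{23}^{m}(\rho_{12}\rho_{13})^{m}=r^{m}$), while $s=0,\dots,m-1$ give the paired terms, yielding the parity-split form asserted in the theorem. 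The specializations then follow: $n=1$ gives $(\rho_{23}+\rho_{12}\rho_{13})W_{0,1}(z|r,q)$ with $W_{0,1}(z|r,q)=z/(1+r)$, recovering (\ref{cyz}); and $n=2$, using $H_2(y|q)=y^2-1$ together with the explicit $W_{0,2}$ and $W_{1,1}$, recovers (\ref{cy2z}). I expect the reparametrization to be the main obstacle: the heart of the argument is verifying that the triangle/parity support of Lemma \ref{3her} is equivalent to $0\le\alpha\le n$, $\gamma\ge0$ and that the resulting geometric ratio is precisely $r=\rho_{12}\rho_{13}\rho_{23}$, for this is exactly what makes (\ref{simp}) applicable with the same parameter as $f_Z$ and forces the clean cancellation.
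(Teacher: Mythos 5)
Your method is exactly the paper's: expand $f_{YZ}$ by (\ref{P-M}), reduce the $y$-integral to the triple-product integral of Lemma \ref{3her}, recognize the inner sum through (\ref{simp}), and cancel the diagonal factor against $f_{Z}$ from (\ref{1wym}); your $(\alpha ,\gamma )$ reparametrization is a cleaner packaging of the paper's case analysis $j=i+2(m-s)+1$, $i=j+2(m-s)+1$. Up to and including the compact expression
\begin{equation*}
E(H_{n}(Y|q)|Z=z)=\sum_{\alpha =0}^{n}\QATOPD[ ] {n}{\alpha }_{q}\rho
_{23}^{\alpha }(\rho _{12}\rho _{13})^{n-\alpha }W_{\alpha ,n-\alpha
}(z|r,q)
\end{equation*}
your computation is correct. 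The problem is your final step: merging the terms $\alpha =s$ and $\alpha =n-s$ via $W_{s,n-s}=W_{n-s,s}$ and $\QATOPD[ ] {n}{s}_{q}=\QATOPD[ ] {n}{n-s}_{q}$ produces the coefficient $\rho _{23}^{s}(\rho _{12}\rho _{13})^{n-s}+\rho _{23}^{n-s}(\rho _{12}\rho _{13})^{s}$, and no symmetry can convert these mixed monomials into the pure powers $\rho _{23}^{n-s}+(\rho _{12}\rho _{13})^{n-s}$ asserted in the theorem: the $W$'s depend on the $\rho $'s only through $r$, so the two coefficient systems are genuinely different. They agree only for $s=0$ and at the central even term $s=m$ (where both equal $r^{m}$), so the first disagreement occurs at $n=3$. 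Your claim that the regrouping ``yields the parity-split form asserted in the theorem'' is therefore false as stated.

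What is actually true is that your formula is the correct one and the printed theorem is not. The paper's own proof commits an index slip: after fixing $j=i+2(m-s)+1$ it writes the summand with $\rho _{23}^{i-s}$ although the double sum carries $\rho _{23}^{i}$, thereby dropping the factor $\rho _{23}^{s}$ (and, in the mirror case $i=j+2(m-s)+1$, the factor $(\rho _{12}\rho _{13})^{s}$); this is exactly the discrepancy above. A decisive check: set $\rho _{12}=0$, so $r=0$ and, by Theorem \ref{margin} ii), $Y|(Z=z)\sim f_{CN}(y|z,\rho _{23},q)$, whence by (\ref{P-M}) and (\ref{oH}) one has $E(H_{n}(Y|q)|Z=z)=\rho _{23}^{n}H_{n}(z|q)$. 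Since $R_{k}(z|0,q)=H_{k}(z|q)$ by (\ref{Rp}) with $\beta =0$, and hence $W_{s,n-s}(z|0,q)=H_{s}(z|q)H_{n-s}(z|q)$ by (\ref{ww}), your formula collapses to the single correct term $\rho _{23}^{n}H_{n}(z|q)$, while the theorem's formula retains the spurious nonzero terms $\QATOPD[ ] {n}{s}_{q}\rho _{23}^{n-s}H_{s}(z|q)H_{n-s}(z|q)$ for $1\leq s\leq m$. Note that (\ref{cyz}) and (\ref{cy2z}) involve only $s=0$ and the central term, so they are valid under both versions and cannot detect the error. In revising, state your own coefficients rather than forcing agreement with the printed statement.
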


\begin{proof}
We start with formula (\ref{3pol}) that we apply to expression for $%
E(H_{n}(Y|q)|Z=z)$ that is equal to 
\begin{gather*}
E(H_{n}(Y|q)|Z=z)f_{R}(z|r,q)=(1-r)f_{N}(z|q)\sum_{j,i\geq 0}\frac{\rho
_{23}^{i}(\rho _{12}\rho _{13})^{j}}{[i]_{q}![j]_{q}!}H_{i}(z|q)H_{j}(z|q) \\
\times \int_{S(q)}H_{n}(y|q)H_{i}(y|q)H_{j}(y|q)f_{N}(y|q)dy= \\
(1-r)f_{N}(z|q)\sum_{j,i\geq 0}\frac{\rho _{23}^{i}(\rho _{12}\rho _{13})^{j}%
}{[i]_{q}![j]_{q}!}H_{i}(z|q)H_{j}(z|q)\frac{[j]_{q}![i]_{q}![n]_{q}!}{[%
\frac{i+j-n}{2}]_{q}![\frac{i+n-j}{2}]_{q}![\frac{j+n-i}{2}]_{q}!}= \\
(1-r)f_{N}(z|q)\sum_{j,i\geq 0}\frac{\rho _{23}^{i}(\rho _{12}\rho
_{13})^{j}[n]_{q}!}{[\frac{i+j-n}{2}]_{q}![\frac{i+n-j}{2}]_{q}![\frac{j+n-i%
}{2}]_{q}!}H_{i}(z|q)H_{j}(z|q)\text{.}
\end{gather*}%
Now if $n$ is odd, say of the form $n=2m+1\text{,}$ $m\geq 0$ we notice that
all three numbers $\frac{i+j-n}{2},\frac{i+n-j}{2},\frac{j+n-i}{2}$ are
simultaneously nonnegative and integer for $j=i+1,i+3,\ldots ,i+(2m+1)$ and $%
i=j+1,j+3,\ldots ,j+(2m+1)\text{.}$ Then these numbers are equal to
respectively $(i-m,m,m+1),\allowbreak (i-m+1,m-1,m+2),\allowbreak \ldots
,(i,0,2m+1)(j-m,m,m+1),\allowbreak (j-m+1,m-1,m+2),\allowbreak \ldots
,(j,0,2m+1)\text{.}$ If say $j=i+2(m-s)+1$ these numbers are equal to $%
(i-s,s,2m+1-s)$ we have the sum (keeping in mind that $r=\rho _{12}\rho
_{13}\rho _{23}):$ 
\begin{gather*}
(1-r)f_{N}(z|q)\sum_{i\geq s}\frac{\rho _{23}^{i-s}[2m+1]_{q}!(\rho
_{12}\rho _{13})^{i+2(m-s)+1}}{[i-s]_{q}![s]_{q}![2m+1-s]_{q}!}%
H_{i}(z|q)H_{i+2(m-s)+1}(z|q)= \\
(1-r)f_{N}(z|q)(\rho _{12}\rho _{13})^{2m-s+1}\QATOPD[ ] {2m+1}{s}%
_{q}\sum_{l\geq 0}\frac{r_{23}^{l}}{[l]_{q}!}H_{l+s}(z|q)H_{l+(2m-s+1)}(z|q)%
\text{.}
\end{gather*}%
Now recall (\ref{simp}) that states that $\sum_{l\geq 0}\frac{r_{23}^{l}}{%
[l]_{q}!}H_{l+s}(z|q)H_{l+(2m-s+1)}(z|q)$ is equal to some polynomial in $z$
of order $s+(2m-s)+1=2m+1$ times $\sum_{l\geq 0}\frac{r_{23}^{l}}{[l]_{q}!}%
H_{l}^{2}(z|q)$. But $(1-r)f_{N}(z|q)\sum_{l\geq 0}\frac{r_{23}^{l}}{[l]_{q}!%
}H_{l}^{2}(z|q)$ is the marginal density of $Z\text{.}$ Thus we see that for 
$s=0,\ldots ,m$ we get linear combination of some (depending on $s)$
polynomials of order $2m+1$ in $z$ with coefficients equal to $\QATOPD[ ] {%
2m-1}{s}_{q}(\rho _{12}\rho _{13})^{2m-s+1}$ times the density of $%
f_{Z}(z|r,q)$. If say $i=j+2(m-s)+1$ then we have similar situation. The
only difference lies in fact that this time linear combination is with $%
\QATOPD[ ] {2m+1}{s}_{q}(\rho _{23})^{2m-s+1}$.

Similar situation is when $n=2m$, $m\geq 1$. Then numbers $\frac{i+j-n}{2},%
\frac{i+n-j}{2},\frac{j+n-i}{2}$ are simultaneously nonnegative and integer
for $i=j,j+2,\ldots ,j+2m$ and $j=i+2,\ldots i+2m\text{.}$ Altogether we
will have $2m+1$ summands. Again each of them will be equal to some
polynomial of order $2m$ in $z$ times $\sum_{l\geq 0}\frac{r_{23}^{l}}{%
[l]_{q}!}H_{l}^{2}(z|q)$ with different (depending on if $j=i+2s$ or $%
i=j+2s) $ coefficients of linear combination.

Now let us apply these ideas to cases $k=1$ and $k=2.$ Recall that $%
H_{1}(x|q)=x$ and $H_{2}(x|q)=x^{2}-1\text{,}$ $W_{1,0}(x|r,q)=\frac{x}{1+r}%
\text{,}$ $W_{1,1}(x|r,q)=x^{2}\frac{1-r}{(1+r)(1-qr^{2})}+\frac{r}{%
(1-qr^{2})}\text{,}$ $W_{2,0}(x|r,q)=x^{2}\frac{1-qr}{(1+r)(1-qr^{2})}-\frac{%
1}{1-qr^{2}}$. Hence $E(Y|Z)$ is indeed given in ii). iii) we get likewise.
\end{proof}

\begin{corollary}
$\forall n,m\geq 0:E(H_{n}(X|q)H_{m}(Y|q)|Z)$ is a polynomial of order at
most $n+m$ of the conditioning random variable $Z.$

In particular we have:%
\begin{eqnarray}
E(XY|Z &=&z)=z^{2}\frac{\rho _{12}(\rho _{13}^{2}+\rho
_{23}^{2})(1-qr)+(1-r)(\rho _{13}\rho _{23}+qr\rho _{12})}{(1+r)(1-qr^{2})}
\label{cconv} \\
&&+\frac{\rho _{12}(1-\rho _{13}^{2})(1-\rho _{23}^{2})}{(1-qr^{2})}.  \notag
\end{eqnarray}
\end{corollary}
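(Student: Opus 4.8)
The plan is to reduce everything to the two preceding results by the tower property of conditional expectation, tracking polynomial degrees throughout. Since $H_m(Y|q)$ is $\sigma(Y,Z)$-measurable, I would first write
\[
E(H_n(X|q)H_m(Y|q)|Z)=E\left( H_m(Y|q)\,E(H_n(X|q)|Y,Z)\,\middle|\,Z\right).
\]
By Proposition \ref{2na1} ii), the inner conditional expectation $E(H_n(X|q)|Y=y,Z=z)$ is a finite linear combination of products $H_a(y|q)H_b(z|q)$ with $a+b\le n$, so the whole problem is to push such a product through the outer conditioning on $Z$.

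The next step is to multiply each such term by $H_m(y|q)$ and linearize the resulting product of two $q$-Hermite polynomials in $y$ using \cite{Szab-rev}(3.13),
\[
H_m(y|q)H_a(y|q)=\sum_{t=0}^{\min(m,a)}\QATOPD[ ] {m}{t}_{q}\QATOPD[ ] {a}{t}_{q}[t]_{q}!\,H_{m+a-2t}(y|q),
\]
so that $H_m(y|q)E(H_n(X|q)|Y=y,Z=z)$ becomes a finite linear combination of terms $H_{m+a-2t}(y|q)H_b(z|q)$. Taking $E(\cdot|Z=z)$ leaves the factor $H_b(z|q)$ unchanged and replaces $H_{m+a-2t}(y|q)$ by $E(H_{m+a-2t}(Y|q)|Z=z)$, which by Theorem \ref{c2d} is a polynomial in $z$ of order at most $m+a-2t$. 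Multiplying by $H_b(z|q)$ of degree $b$ gives a polynomial of order at most $m+a-2t+b\le m+a+b\le m+n$, and since there are only finitely many summands the bound is preserved. This establishes the general claim.

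For the explicit identity \ref{cconv} I would specialize to $n=m=1$, where $H_1(X|q)H_1(Y|q)=XY$, and apply the tower property together with \ref{ex} to obtain
\[
E(XY|Z=z)=\frac{\rho_{12}(1-\rho_{13}^2)}{1-\rho_{12}^2\rho_{13}^2}\,E(Y^2|Z=z)+\frac{\rho_{13}(1-\rho_{12}^2)}{1-\rho_{12}^2\rho_{13}^2}\,z\,E(Y|Z=z).
\]
Substituting the conditional moments \ref{cyz} and \ref{cy2z} and collecting the coefficients of $z^2$ and the constant term yields the stated formula after clearing the common denominator $(1+r)(1-qr^2)$. The main obstacle is not conceptual but lies in the degree bookkeeping of the general step—ensuring that neither the Al-Salam--Chihara expansion of Proposition \ref{2na1} nor the linearization pushes the total order above $m+n$—while the closing computation of $E(XY|Z=z)$ is a routine, if somewhat lengthy, algebraic simplification.
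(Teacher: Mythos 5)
Your proof is correct and follows essentially the same route as the paper's: the tower property $E(H_{n}(X|q)H_{m}(Y|q)|Z)=E\left( H_{m}(Y|q)E(H_{n}(X|q)|Y,Z)|Z\right) $, the polynomial form of the inner expectation from Proposition \ref{2na1}, and Theorem \ref{c2d} to handle the outer conditioning, while for (\ref{cconv}) the paper likewise combines (\ref{ex}), (\ref{cyz}) and (\ref{cy2z}), merely delegating the algebra to Mathematica. The only difference is explicitness: you spell out the degree bookkeeping (the $q$-Hermite linearization step and the explicit appeal to Theorem \ref{c2d}) that the paper's proof leaves implicit, which if anything makes the argument more complete.
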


\begin{proof}
By the tower property of conditional expectation we have%
\begin{equation*}
E(H_{n}(X|q)H_{m}(Y|q)|Z=z)=E(H_{m}(Y|q)E(H_{n}(X|q)|(Y,Z))|Z=z)
\end{equation*}%
Using an assertion of Proposition \ref{2na1} we deduce that $%
E(H_{n}(X|q)|(Y=y,Z=z))$ is a polynomial of order $n$ in $y$ and $z.$ Hence $%
H_{m}(Y|q)E(H_{n}(X|q)|(Y=y,Z=z))$ is a polynomial of order $n+m$ in $y$ and 
$n$ in $z$ but together of order $n+m$ in both variables. It follows
formulae (\ref{1na2_1}) or (\ref{1na2_2}). Hence, indeed $%
E(H_{n}(X|q)H_{m}(Y|q)|=z)$ is a polynomial of order $n+m$ in $z.$

To get (\ref{cconv}) we combine (\ref{ex}), (\ref{cyz}) and (\ref{cy2z}) and
then use Mathematica.
\end{proof}

\section{Remarks and open problems\label{open}}

1. The most important remark seems to be an observation of the fact that all
conditional moments of order say $n$ are also polynomials of order $n$ in
the conditioning random variables. Among three-dimensional distributions
having similar property are the Gaussian distributions. Are there any others
except these two mentioned?

2. It could of interest to consider two extremal cases, i.e. $q=0$ and $%
q\rightarrow 1^{-}$.

2a. Following (\ref{Rp}), for $q=0$ we get: $w_{-1}(x|r,0)=0\text{,}$ $%
w_{0}(x|r,0)=1\text{,}$ $w_{1}(x|r,0)=x\text{,}$ $w_{2}(x|r,0)=x^{2}-1$ and
further for $n\geq 2$ the recursion (\ref{Rp}) becomes the following: 
\begin{equation*}
w_{n+1}(x|r,0)=xw_{n}(x|r,0)-w_{n-1}(x|r,0)
\end{equation*}%
which is the recurrence satisfied by the Chebyshev polynomials.
Consequently, we deduce that for $n\geq 1$ we have: 
\begin{equation*}
w_{n}(x|r,0)=U_{n}(x/2)-(2-r)U_{n-2}(x/2)\text{.}
\end{equation*}%
With little of algebra applied to (\ref{1wym}) with $q=0$ we get the
following density of one dimensional distribution: 
\begin{equation*}
f_{X}(x|r,0)=\frac{(1+r)\sqrt{4-x^{2}}}{2\pi ((1+r)^{2}-rx^{2})}\text{,}
\end{equation*}%
that is Kesten-McKay distribution.

2b. On the other hand again following (\ref{Rp}) for $q\rightarrow 1^{-}$ we
get: $w_{-1}(x|r,1)=0\text{,}$ $w_{0}(x|r,1)=1\text{,}$ and for $n\geq 0$ 
\begin{equation}
w_{n+1}(x|r,1)=xw_{n}(x|r,1)-\frac{1+r}{1-r}nw_{n-1}(x|r,1).  \label{wr}
\end{equation}%
Recall also that the so called probabilistic Hermite polynomials $\left\{
H_{n}(x)\right\} _{n\geq -1}$ i.e. orthogonal with respect to $\exp
(-x^{2}/2)/\sqrt{2}$ satisfy the following recurrence: 
\begin{equation}
H_{n+1}(x)=xH_{n}(x)-nH_{n-1}(x).  \label{he}
\end{equation}%
Hence combining (\ref{wr}) and (\ref{he}) we see that 
\begin{equation*}
w_{n}(x|r,1)=(\frac{1+r}{1-r})^{n/2}H_{n}(\sqrt{\frac{1-r}{1+r}}x)\text{,}
\end{equation*}%
for all $n\geq -1.$ Thus we deduce that the one dimensional marginal has
density equal to 
\begin{equation*}
f_{X}(x|r,1)=\sqrt{\frac{(1-r)}{2\pi (1+r)}}\exp (-\frac{(1-r)x^{2}}{2(1+r)})%
\text{.}
\end{equation*}

3. Let $(Y,Z)$ be two random variables. Suppose that we know the marginal
distribution of $Y$. The property that all conditional moments of order $n$
say of $Y$ are polynomials of the same order in conditioning random variable
(i.e. $Z$) for $n\allowbreak \geq 1$ was called general polynomial
regression property (GRP) of $Y$ given $Z$ in \cite{BrSz90}. Also, there the
property that additionally the coefficients by the greatest powers (say $n)$
are of the form $\rho ^{n}$, where $\rho $ denotes here correlation
coefficient between was called there simply polynomial regression (PR)
property of $Y$ given $Z.$ In \cite{BrSz90} it was shown that if both
marginals are standardized normal and both have GRP and one of then has PR
property, then necessarily joint distribution of $(Y,Z)$ must be binormal
distribution with parameters $(0,0;1,1,\rho ).$

Theorem \ref{c2d} states both, two random variables $(Y,Z)$ having joint
distribution with the density $f_{YZ}$ given by assertion ii) of Theorem \ref%
{margin} have GRP property. It seems that PR property is not satisfied by
none of the variables.

However the intriguing question seems to be what other property of
conditional moments should be additionally assumed in order to get the
characterization of joint distribution $f_{YZ}?$

\begin{acknowledgement}
The author is very grateful to two unknown referees. Especially to the one
of them, who has carefully read and checked all formulae and pointed out
typos and small mistakes.
\end{acknowledgement}

\end{document}